\newcolumntype{R}{>{$}r<{$}} 
\newcolumntype{C}{>{$}c<{$}} 
\newtheorem{theorem}{Theorem}[section]
\newtheorem{definition}[theorem]{Definition}
\newtheorem{corollary}[theorem]{Corollary}
\newtheorem{lemma}[theorem]{Lemma}
\newtheorem{proposition}[theorem]{Proposition}
\newtheorem{remark}[theorem]{Remark}
\newtheorem*{theorem*}{Theorem}
\newtheorem*{definition*}{Definition}
\newtheorem*{lemma*}{Lemma}
\numberwithin{equation}{section}
\DeclareMathOperator{\End}{End}
\DeclareMathOperator{\im}{im}
\DeclareMathOperator{\ad}{ad}
\newcommand{\bbc}{\mathbb{C}}
\newcommand{\bbs}{\mathbb{S}}
\newcommand{\bbz}{\mathbb{Z}}
\newcommand{\Ca}{\mathcal{A}}
\newcommand{\Cc}{\mathcal{C}}
\newcommand{\Cd}{\mathcal{D}}
\newcommand{\Ci}{\mathcal{I}}
\newcommand{\Cm}{\mathcal{M}}
\newcommand{\Cp}{\mathcal{P}}
\newcommand{\Cu}{\mathcal{U}}
\newcommand{\Cw}{\mathcal{W}}
\newcommand{\Cx}{\mathcal{X}}
\newcommand{\Cz}{\mathcal{Z}}
\newcommand{\lie}[1]{\mathfrak{#1}}
\newcommand{\sqbinom}[2]{\genfrac{[}{]}{0pt}{}{#1}{#2}}
\newcommand{\deq}{\arrow[draw=none]{d}[sloped,auto=false]{=}}
\newcommand{\dop}{\arrow[draw=none]{d}[sloped,auto=false]{\oplus}}
  \newcommand{\details}[1]{
      \ \\
      {\color{blue}
        \textbf{Details:} #1
      }
      \ \\
  }
 \newcommand{\details}[1]{}
\title{On the Quantum Metaplectic Howe Duality}
\author{Matheus Brito, Marcelo De Martino}
\date{}
\begin{document}
\maketitle
\renewcommand{\thefootnote}{\fnsymbol{footnote}} 
\footnotetext{2010 \emph{Mathematics subject classification.} Primary: 17B37, 20G42; Secundary: 05E10}  
\renewcommand{\thefootnote}{\arabic{footnote}} 
\renewcommand{\thefootnote}{\fnsymbol{footnote}} 
\footnotetext{\emph{Keywords.} Quantum groups, quantum Howe duality, quantum symplectic Dirac operator.}  
\renewcommand{\thefootnote}{\arabic{footnote}} 
\renewcommand{\thefootnote}{\fnsymbol{footnote}} 
\footnotetext{M. Brito - Universidade Federal do Parana - (mbrito@ufpr.br)}
\footnotetext{M. De Martino - University of Ghent (marcelo.goncalvesdemartino@ugent.be)}  
\renewcommand{\thefootnote}{\arabic{footnote}} 

\begin{abstract}
    We establish a quantum analogue of the classical metaplectic Howe duality involving the pair of Lie algebras $(\lie{sp}_{2n},\lie{sl}_2)$ in the case when $n=1$. Our
    results yield commuting representations of the pair of Drinfeld-Jimbo quantum groups $(\Cu_{q^2}(\lie{sl}_2),\Cu_{q}(\lie{sl}_2))$ realized in a suitable algebra of $q$-differential operators acting on the space of symplectic polynomial spinors. We obtain $q$-analogues for the symplectic Dirac operator, the Fischer decomposition, the expression for the symplectic polynomial monogenics and for the projection operators onto the monogenics. We also discuss $q$-analogues of generalized symmetries of the $q$-symplectic Dirac operator raising the homogeneous polynomial degree.
\end{abstract}

\section{Introduction}

The topic of Howe duality started with the influential works \cite{H89a,H89b} (see also \cite{H95}) in which Howe obtained a uniform description of the First Fundamental Theorem of Classical Invariants. His formulation was in terms of the Weyl algebra of partial differential operators with polynomial coefficients (and more generally a Weyl-Clifford algebra) acting on a suitable space of polynomial functions (respectively, polynomial-spinors). In his theory, the building blocks are pairs of reductive mutually-centralizing Lie (super)algebras inside the Weyl(-Clifford) algebra that, essentially, yield a multiplicity-free decomposition of the polynomial(-spinor) space.
Although Howe's treatment was done in a very general setting, considerable effort was put to describe these dualities in specific (families of) dual pairs (see, e.g., the books \cite{GW00,CW12} and the references therein for a detailed account on many of these dualities).
In the subsequent decades, several authors have investigated deformations of such theory to a quantum group context, both in a general way in terms of the Fundamental Theorems of Invariant Theory \cite{LZZ11,Z20}, but also in specialized, case-by-case ways, dealing with specific (families of) dual pairs at a time \cite{NUW96,IK01,FKZ19,BT23,Z03}. 

On the other hand, classical Howe dualities are specially useful in the context of Clifford analysis and their Hermitian extensions, where the algebras involved are described in terms of certain Dirac operators and double-covers of reductive groups. For an account on the orthogonal theory, see \cite{BDSES10} (and the references therein). For the symplectic theory see \cite{DBSS14,DBHS17}  and for its Hermitian extension, see \cite{EM24}. It is but natural to consider $q$-deformations of these theories and quantum analogues of Clifford analysis have been proposed, specially in the orthogonal case, in \cite{CS10,BSZ22}.

The present work concerns a initial step towards investigating $q$-deformations of symplectic Clifford analysis. We will achieve a complete $q$-analogue of the theory in the case where the rank of the symplectic algebra involved is one. Our formulation is in terms of $q$-differential operators as described in \cite{Ha90}. We shall use these $q$-Weyl algebras both in the polynomial side and also in the symplectic Clifford algebra side, and the end result is a commuting pair of Drinfeld-Jimbo quantum $\lie{sl}_2$'s acting on the space of symplectic polynomial-spinors in such a way that, when taking the limit $q\to 1$, we recover the results as in the classical case. Our setting is quite fortunate since, given the widely-known fact that the process of $q$-deformations of universal
enveloping algebras is not compatible with inclusion maps \cite[p.227]{NUW96}, it is not always the case that in the quantum group analogue of a given Howe dual pair, both members of the pair are of Drinfeld-Jimbo type (see \cite{NUW96,IK01}).

In more details, we now summarize our findings and explain the contents of each section. We start, in Section \ref{s:Prelim}, by recalling some fundamental ideas on quantum Calculus, in special some $q$-binomial identities that will be used throughout. We then introduce, in Section \ref{s:qWeyl}, the concept of $q$-Weyl algebras following the work of Hayashi in \cite{Ha90} and summarize some important commutation relations that will be crucial in realizing the dual pair. In Section \ref{s:Uqsl2} we recall the definitions of $\Cu_{q}(\lie{sl}_2)$ and some of its representation theory. Section \ref{s:qDuality} contains our main results. Namely, Theorem \ref{thm:spinact} on the realization, in terms of $q$-differential operators, of the dual partner of the diagonal action of $\Cu_{q^2}(\lie{sl}_2)$ on the space of symplectic polynomial-spinors and Corollary \ref{cor:FischerTriang}, which is the appropriate analogue of the metaplectic Fischer triangle in our context. We remark that the quantum parameter of the dual quantum group is $q$, the square root of the parameter of the diagonal quantum group. Furthermore, the dual partner contains a $q$-analogue of the symplectic Dirac operator (for a treatment in the classical setting, see \cite{HH06} and \cite{DBSS14}) and, along the way, we give precise formulas for the generators of the space of symplectic polynomial monogenics for each homogeneous degree $d\geq 0$. We then arrive at Section \ref{s:projection} where we give an ad hoc formulation (bypassing the need of using $q$-analogues of extremal projectors, and hence working in suitable localizations of the algebras involved) of the projection operators $\Cp(\bbs)^d\to \Cm^d$ onto the space of symplectic monogenics for each homogeneous degree (see Theorem \ref{t:Projection} for the precise statement and \eqref{eq:PolySpinDecomp}, Definition \ref{d:Monogenics}). Finally, in Section \ref{s:GenSym}, we give rather explicit descriptions of generalized symmetries of the $q$-symplectic Dirac operator sending $\Cm^{d}\to \Cm^{d\pm 1}$, for all $d\geq 0$.

We stress that the results obtained in the paper are all conditioned to the hypothesis that $q$ is not a root of unity. This assumption is crucial since in virtually all the formulas we consider when describing the joint decomposition of the polynomial-spinor space, division by a quantum integer is involved. However, to define the commuting pair of quantum $\lie{sl}_2$'s, we only need to divide by $[2]_q$. Thus, it may be interesting to study this duality in the case when $q$ is a root of unity outside the case $q^4=1$ and consider the finite-dimensional counterparts of the polynomial spaces, as described in \cite[Section 4.2]{Ha90}. We leave this and the topic of considering higher rank dualities to a future work. For the latter, a first step would be to realize the diagonal representation of the symplectic quantum group in terms of differential operators using \cite{ZH17}. The difficulty lies in the fact that, when interpreting these operators in the $q$-Weyl algebra as in the present work, quartic terms (in terms of $q$-differential and multiplication operators) appear and the computations become significantly more evolved.

\subsection*{Note on the arXiv version}  For the interested reader, the tex file of the arXiv version of this paper includes hidden details of some straightforward computations and arguments that are omitted in the pdf file.  These details can be displayed by switching the \texttt{details} toggle to true in the tex file and recompiling.

\section{Preliminaries}\label{s:Prelim}
A standard reference for this part is \cite[Chapter 2]{KS12}. Let $q\in\mathbb{C}^\times\setminus \{\pm 1\}$. For any $z\in\bbc$, we let $[z]_q=(q^z-q^{-z})/(q-q^{-1})$ denote its quantum analogue. From elementary calculus, 
it is straightforward to check that $\lim_{q\to 1}[z]_q = z$. In the particular case when $n\in \bbz$, the quantum integer $[n]_q$ satisfies
\[
[n]_q = q^{n-1} + q^{n-3}+\cdots +q^{3-n} + q^{1-n},
\]
where we note that $[0]_q = 0$ and $[1]_q = 1$. Note that when $q$ is not a root of unity, we have that $[n]_q\neq 0$, for all $n\in \bbz_{\geq 1}$.
\subsection{Binomial identities}

For $n\in \bbz_{\geq 0}$, the quantized factorial is defined via $[0]_q! = 1$ and $[n]_q!=\prod_{m=1}^n[m]_q!$. It is clear from the definition that
$\prod_{\ell=0}^{m-1} [n-\ell]_q = [n]_q!/[n-m]_q!$, when $0\leq m \leq n$. The quantized binomial coefficients are given by
\[
\sqbinom{n}{m}_q = \frac{[n]_q!}{[m]_q![m-n]_q!}  
\]
and the quantized analogue of the binomial formula reads as 
\begin{equation}\label{e:binomid}
\prod_{\ell=1}^{n}(1+q^{2\ell}x) = \sum_{\ell=0}^n\sqbinom{n}{\ell}_qq^{(n+1)\ell} x^\ell.
\end{equation}
In what follows, we will need another quantized analogue of a known identity involving the binomial coefficients. For that, consider the divided difference operator $\delta_q$ acting on functions $\varphi(x)$ of one variable via 
\[
(\delta_q \varphi)(x) = \frac{\varphi(q^2x) - \varphi(x)}{q^2x - x}.
\]
Note that if $\varphi$ is differentiable, we have $\lim_{q\to 1}(\delta_q \varphi)(x) = \varphi'(x)$. Note also that for the monomials $\varphi(x) = x^m$ we have $\delta_q(x^m) = q^{m-1}[m]_qx^{m-1}$, and,  for $0\leq j \leq m$, we have
\begin{equation}\label{eq:iterateddivdiff}
    \delta_q^j(x^m) = q^{mj - j(j+1)/2}\frac{[m]_q!}{[m-j]_q!}x^{m-j}.
\end{equation}
Furthermore, the operators $\delta_q$ satisfy the twisted Leibniz rule
\begin{equation}\label{eq:twistedLeibniz}
\delta_q(\varphi\psi)(x) = \varphi(q^2x)(\delta_q\psi)(x) + (\delta_q\varphi)(x)\psi(x).
\end{equation}

Now, if we denote the binomial formula by $\beta(x) = \prod_{\ell=1}^{n}(1+q^{2\ell}x)$, it straightforward to check from the definition of $\delta_q$ that $(\delta_q \beta)(x) = q^{n+1}[n]_q\prod_{\ell=2}^{n}(1+q^{2\ell}x).$
Iterating this procedure for $0\leq j \leq n$, we obtain
\begin{equation}\label{eq:diffbinom}
(\delta_q^j \beta)(x) = q^{j(2n+j+1)/2}\frac{[n]_q!}{[n-j]_q!}\left(\prod_{\ell=j+1}^{n}(1+q^{2\ell}x)\right). 
\end{equation}

\begin{lemma}\label{l:BinomId}
    Let $N\geq n$. Then, for all $0\leq j <n$ we have
    \[
    \sum_{\ell=0}^n(-1)^\ell\sqbinom{n}{\ell}_q \sqbinom{N+\ell}{j}_q q^{\ell(j-(n-1))} = 0.
    \]
    In particular, $\sum_{\ell=0}^n(-1)^\ell\sqbinom{n}{\ell}_q \sqbinom{N+\ell}{n-1}_q = 0$.
\end{lemma}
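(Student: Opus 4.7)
The plan is to recognize the sum
\[
S_j := \sum_{\ell=0}^n(-1)^\ell\sqbinom{n}{\ell}_q \sqbinom{N+\ell}{j}_q q^{\ell(j-(n-1))}
\]
as being, up to a nonzero scalar depending on $N,j,q$, the value at $x=1$ of the iterated $q$-derivative $\delta_q^j f(x)$ for a carefully chosen polynomial $f$. I will then show that $f$ visibly vanishes at enough ``$q$-spaced'' points near $1$ to force this derivative to vanish when $j<n$.

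The polynomial I want to take is $f(x):=x^N\beta(-q^{-2n}x)$, with $\beta$ as in \eqref{e:binomid}. The key feature is that $\beta$ admits both a series and a product expansion, and both become useful after the substitution $x\mapsto -q^{-2n}x$. From the series side,
\[
f(x)=\sum_{\ell=0}^n(-1)^\ell\sqbinom{n}{\ell}_q q^{-(n-1)\ell}\,x^{N+\ell}.
\]
Applying $\delta_q^j$ term by term via \eqref{eq:iterateddivdiff} (which applies since $N+\ell\geq n>j$) and evaluating at $x=1$, using $[j]_q!\,\sqbinom{N+\ell}{j}_q = [N+\ell]_q!/[N+\ell-j]_q!$, I expect to obtain
\[
\delta_q^j f(1)=[j]_q!\,q^{Nj-j(j+1)/2}\,S_j.
\]
Since $q$ is not a root of unity, the prefactor is nonzero, so it suffices to prove $\delta_q^j f(1)=0$ for $0\leq j<n$.

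From the product side, the same substitution gives $\beta(-q^{-2n}x)=\prod_{\ell=1}^n(1-q^{2\ell-2n}x)=\prod_{k=0}^{n-1}(1-q^{-2k}x)$, so $f(x)=x^N\prod_{k=0}^{n-1}(1-q^{-2k}x)$ vanishes at the $n$ points $x=q^{2m}$ for $m=0,1,\dots,n-1$, because the factor with $k=m$ becomes $1-1=0$. I will combine this with the elementary observation, proved by a short induction on $j$ directly from the definition of $\delta_q$, that for every polynomial $h$ the value $\delta_q^j h(x_0)$ is a linear combination (with coefficients independent of $h$) of $h(x_0),h(q^2x_0),\dots,h(q^{2j}x_0)$. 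Taking $x_0=1$ and $h=f$, for $j<n$ each of the arguments $q^{2m}$ with $0\leq m\leq j$ lies in the zero set $\{1,q^2,\dots,q^{2(n-1)}\}$ of $f$, so $\delta_q^j f(1)=0$, which completes the proof. The second assertion of the lemma then follows by specializing to $j=n-1$.

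I do not anticipate a serious obstacle: once one views \eqref{e:binomid} as a product--series duality for $\beta$, the choice $f(x)=x^N\beta(-q^{-2n}x)$ essentially imposes itself, since the series expansion of $f$ arranges the data of $S_j$ as an iterated $q$-derivative while the product expansion of $f$ exhibits the required $n$ zeros at $1,q^2,\dots,q^{2(n-1)}$. The only minor bookkeeping is keeping the $q$-powers and shifts straight in the two Leibniz/chain computations.
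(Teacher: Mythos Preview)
Your proof is correct and is essentially the same strategy as the paper's: both compute $\delta_q^j$ of (a rescaling of) $x^N\beta$ and use the product form of $\beta$ to see the vanishing, while the series form yields $S_j$. The only organizational difference is that the paper evaluates $\delta_q^j(x^N\beta)$ at $x=-q^{-2n}$ and invokes the twisted Leibniz rule together with the explicit formula \eqref{eq:diffbinom}, whereas you first substitute $x\mapsto -q^{-2n}x$ and then argue directly from the finite-difference nature of $\delta_q^j$ that $\delta_q^j f(1)$ only samples $f$ at $1,q^2,\dots,q^{2j}$, all zeros of $f$; this bypasses the Leibniz step and is arguably a bit cleaner.
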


\begin{proof}
    Using \eqref{eq:diffbinom}, note that $(\delta_q^j \beta)(-q^{-2n})=0$ for all $0\leq j <n$. So, from \eqref{eq:twistedLeibniz}, and for all $0 \leq j < n \leq N$, it follows that $\delta^j_q(x^N\beta)(-q^{-2n})=0$. Now, from \eqref{e:binomid} we get
    \[
    x^N\beta(x) = \sum_{\ell=0}^n\sqbinom{n}{\ell}_qq^{(n+1)\ell} x^{N+\ell},
    \]
    so, recalling the expression for $\delta^j_q(x^{N+\ell})$ from \eqref{eq:iterateddivdiff} we compute that
    \[
    \delta^j_q(x^N\beta)(x) = q^{(Nj-j(j+1)/2)}\sum_{\ell=0}^n\sqbinom{n}{\ell}_q\frac{[N+\ell]_q!}{[N+\ell-j]_q!}q^{(n+1+j)\ell} x^{N+\ell-j}.
    \]
    Evaluating at $-q^{-2n}$ and multiplying by $(-1)^{N-j}q^v([j]_q!)^{-1}$ for a suitable exponent $v$, we get
\[
    \sum_{\ell=0}^n(-1)^\ell\sqbinom{n}{\ell}_q \sqbinom{N+\ell}{j}_q q^{(-n+j+1)\ell} = 0,
\]
as required.
\end{proof}

\section{Quantized Weyl algebras}\label{s:qWeyl}
We will consider a normalized version of the divided difference operator $\delta_q$ used in Section \ref{s:Prelim}. Given a function of one variable $\varphi(x)$ the $q$-derivative $\partial_q$ is the divided difference operator
\[
(\partial_q \varphi)(x) = \frac{\varphi(q x)-\varphi(q^{-1}x)}{q x-q^{-1}x}.
\]
 Following \cite{Ha90}, we will now describe $q$-analogues of the algebra of partial differential operators with polynomial coefficients. For each $N\in\mathbb{Z}_{\geq 1}$, let $\mathcal{P}_N:=\mathbb{C}[x_1,\ldots,x_N]$ denote the polynomial space on $N$-variables. Recall that $q\notin \{1,-1\}$.
 
\begin{definition}
    The $N$-th quantized Weyl algebra $\mathcal{W}_q(N)$ is defined as the unital, associative $\mathbb{C}$-algebra generated by
    \begin{align}\label{eq:qweylgen}
        \mu_j,\partial_j,\gamma_j^{\pm 1}, \ \ \  1\leq j \leq N
    \end{align}
    and subject to the following relations for each $1\leq i,j\leq N$:
    \begin{gather}\label{eq:qWeylRelGam}
    \gamma_j\gamma_i=\gamma_i\gamma_j,\quad \gamma_j\gamma_j^{-1}=1,\\\label{eq:qWeylRelGamCom}
    \gamma_j\partial_i\gamma_j^{-1} = q^{-\delta_{ij}}\partial_i,\quad\gamma_j\mu_i\gamma_j^{-1} = q^{\delta_{ij}}\mu_i,\\\label{eq:qWeylRelCom1}
    [\partial_j,\partial_i]=0=[\mu_j,\mu_i],\\
    [\partial_j,\mu_i]=0, \ \ {\rm if} \ \ i\neq j,\\\label{eq:qWeylRelCom2}
    \partial_j\mu_j - q^{\pm 1}\mu_j\partial_j = \gamma_j^{\mp 1}.
    \end{gather}   
    Here, the symbol $\delta_{ij}$ means the Kronecker delta.
\end{definition}
\begin{remark}
For ease of notation it will be convenient to define $\{a\}_q = (a-a^{-1})/(q-q^{-1})$ for any invertible element $a$ in an associative algebra over $\bbc$.
    In particular, note that the last relations in (\ref{eq:qWeylRelCom2}) are equivalent to the relations
\begin{align}\label{eq:qWeylRelCom2dif}
    \mu_j\partial_j = \{\gamma_j\}_q,\quad \partial_j\mu_j = \{q\gamma_j\}_q,
\end{align}
for $1 \leq j \leq N$.
\end{remark}
The relations \eqref{eq:qWeylRelGam}--\eqref{eq:qWeylRelCom2} imply a realization of $\mathcal{W}_q(N)$ inside $\End(\mathcal{P}_N)$ where the action is defined on the monomial basis (in multi-index notation)
\[
\{x^\alpha:=x_1^{a_1}\cdots x_N^{a_N}\mid \alpha=(a_1,\ldots,a_N)\in\mathbb{Z}_{\geq 0}^N\} \subseteq \mathcal{P}_N
\]
via
\[
    \gamma_j^{\pm 1}(x^\alpha) = q^{\pm a_j}x^\alpha,\qquad
    \mu_j(x^\alpha) = x^{\alpha+\varepsilon_j},\qquad
    \partial_j(x^\alpha) = [a_j]_{q}x^{\alpha-\varepsilon_j},
\]
where $\varepsilon_j\in \mathbb{Z}_{\geq 0}^N$ is the vector with $1$ in the $j$-th coordinate and $0$ everywhere else. Note that for each index $1\leq j \leq N$, the partial $q$-derivative operator is explicitly realized as 
\[
    \partial_j = \partial_{j,q} = \frac{1}{x_j}\frac{\gamma_j-\gamma_j^{-1}}{q-q^{-1}}.
\]
Now recall that $[m]_q = q^{m-1}+q^{m-2}+\cdots+q^{2-m}+q^{1-m}$, for any $q\in \bbc^\times$. Formally replacing $q$ by the invertible element $\gamma_j$ yields a well-defined element $[m]_{\gamma_j}\in \Cw_q(N)$ that satisfies
    \[
    (\gamma_j-\gamma_j^{-1})[m]_{\gamma_j} = \gamma_j^{m} - \gamma_j^{-m}.
    \]
    In particular, for all $m\in \bbz_{\geq 1}$, the shifted $q$-derivatives $\partial_{j,q^m}$, given by
\begin{equation}\label{eq:powerdiff}
    \partial_{j,q^m} = \frac{1}{x_j}\frac{\gamma_j^{m}-\gamma_j^{-m}}{q^m-q^{-m}} = \frac{1}{x_j}\frac{(\gamma_j-\gamma_j^{-1})[m]_{\gamma_j}}{(q-q^{-1})[m]_q} = \partial_{j,q}\frac{[m]_{\gamma_j}}{[m]_q},
\end{equation}
will be well-defined elements in $\Cw_q(N)$, provided that $q$ is not a root of unity.

In what follows, given $v\in\bbc^\times$ and elements $A,B$ in some associative algebra over $\bbc$, we will denote their twisted commutator by
\begin{equation}\label{eq:twistedComm}
    [A,B]_v = AB-vBA.
\end{equation} 
For example, note that the relation \eqref{eq:qWeylRelCom2} can be written as $[\partial_j,\mu_j]_{q^{\pm 1}} = \gamma_j^{\mp 1}$. We now summarize some commutation relations in $\Cw_q(N)$ that will be useful later on. 

\begin{proposition}\label{p:qWeylComm}
    The following relations hold, for indices $1\leq i,j \leq N$, $m\in\bbz_{\geq 1}$ and $q\in\bbc^\times$ such that $[m]_q\neq 0$.
    \begin{align}\label{eq:CommId0a}
    [\partial_j,\mu_j^2]_{q^{\pm 2}} &= q^{\pm 1}[2]_q\gamma_j^{\mp 1}\mu_j,\\\label{eq:CommId0b}
    [\partial_j^2,\mu_j]_{q^{\pm 2}} &=[2]_q\gamma_j^{\mp 1}\partial_j,\\
    \label{eq:CommId1}
    [\partial_{j,q^m},\mu_j]_{q^{\pm m}} &= \gamma_j^{\mp m},\\
    \label{eq:CommId2}
    [\mu_i\partial_{j,q^m},\mu_j\partial_{i,q^m}] &= \{(\gamma_i\gamma_j^{-1})^{m}\}_{q^m},\\\label{eq:CommId3}
    [\partial_{i,q^m}\partial_{j,q^{m}},\mu_i\mu_j] &= \{(q\gamma_i\gamma_j)^m\}_{q^m},\\\label{eq:CommId4}
    [\mu_i\partial_j,\mu_j\partial_{i,q^2}]_{q^{\pm 1}} &= \frac{(\pm 1)}{q- q^{-1}}\left(q^{\pm 1}\gamma_i^{\pm 2}\gamma_j^{\mp 1} - \frac{q^{\pm 2}\gamma_i^{\pm 2}\gamma^{\pm 1}_j}{[2]_q} - \frac{\gamma_i^{\mp 2}\gamma_j^{\pm 1}}{[2]_q}\right),\\\label{eq:CommId5}
    [\partial_{i,q^2}\partial_j,\mu_i\mu_j]_{q^{\pm 1}} &= \frac{(\pm 1)}{q- q^{-1}}\left(\frac{q^{\pm 2}\gamma_i^{\pm 2}\gamma_j^{\pm 1}}{[2]_q} + \frac{\gamma_i^{\mp 2}\gamma_j^{\pm 1}}{[2]_q} -  q^{\mp 1}\gamma_i^{\mp 2}\bar\gamma_j^{\mp 1}\right).
    \end{align}
\end{proposition}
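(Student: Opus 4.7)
My plan is to verify each of the identities \eqref{eq:CommId0a}--\eqref{eq:CommId5} by direct computation, using the fact that the defining relations \eqref{eq:qWeylRelGam}--\eqref{eq:qWeylRelCom2} realize $\Cw_q(N)$ faithfully as a subalgebra of $\End(\Cp_N)$. Equality of two elements can therefore be checked on the monomial basis $\{x^\alpha\}$, where the action of every generator is explicit and any word in the $\gamma_i^{\pm 1}$'s acts as a scalar depending on $\alpha$.

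For the single-variable identities \eqref{eq:CommId0a} and \eqref{eq:CommId0b}, a short algebraic proof suffices: iterate the defining relation $[\partial_j,\mu_j]_{q^{\pm 1}}=\gamma_j^{\mp 1}$ and use the braiding $\gamma_j\mu_j=q\mu_j\gamma_j$ to move factors of $\gamma_j^{\mp 1}$ past $\mu_j$, producing the factor $[2]_q = q+q^{-1}$. Identity \eqref{eq:CommId1} is immediate from \eqref{eq:powerdiff} once one checks on a monomial $x^a$ that $[a+1]_{q^m}-q^{\pm m}[a]_{q^m} = q^{\mp m a}$, which is precisely the action of $\gamma_j^{\mp m}$.

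For the two-index identities \eqref{eq:CommId2} and \eqref{eq:CommId3}, the key algebraic input is the pair of telescoping formulas
\[
[a]_{q^m}[b+1]_{q^m} - [b]_{q^m}[a+1]_{q^m} = [a-b]_{q^m}, \qquad [a+1]_{q^m}[b+1]_{q^m} - [a]_{q^m}[b]_{q^m} = [a+b+1]_{q^m},
\]
obtained by expanding each quantum integer and cancelling. Setting $a=a_i$, $b=a_j$ and using the distinct-index commutations, these recover $\{(\gamma_i\gamma_j^{-1})^m\}_{q^m}$ and $\{(q\gamma_i\gamma_j)^m\}_{q^m}$ acting on $x^\alpha$.

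Identities \eqref{eq:CommId4} and \eqref{eq:CommId5} are the most laborious because the two factors involve shifted derivatives at different parameters ($\partial_j=\partial_{j,q}$ vs.\ $\partial_{i,q^2}$), producing a three-term right-hand side rather than a single $\{\cdot\}_{q^m}$ expression. The plan is again to evaluate both sides on $x^\alpha$, yielding the LHS in the form, e.g., $[a_i]_{q^2}[a_j+1]_q - q^{\pm 1}[a_j]_q[a_i+1]_{q^2}$; after clearing the common denominator $[2]_q(q-q^{-1})^2$ and grouping the resulting eight monomials by their exponents $q^{\pm 2a_i\pm a_j + c}$, the sum collapses to the claimed three-term expression. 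The main obstacle here is purely bookkeeping: no conceptual difficulty arises, but the $[2]_q$ factors, signs, and shifts must be tracked carefully, and I would organize the verification by first multiplying both sides by $[2]_q(q-q^{-1})$ and then comparing coefficients of each distinct monomial $q^{\alpha a_i+\beta a_j}$.
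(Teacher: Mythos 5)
Your approach is correct but genuinely different from the paper's. The paper's proof (spelled out in the hidden ``details'' blocks of the source) stays entirely inside $\Cw_q(N)$: it uses the normal-order identities
\[
\mu_j\partial_{j,q^m} = \{\gamma_j^m\}_{q^m}, \qquad \partial_{j,q^m}\mu_j = \{q^m\gamma_j^m\}_{q^m},
\]
so that every product appearing on the left of \eqref{eq:CommId1}--\eqref{eq:CommId5} collapses to an element of the commutative Laurent subalgebra $\bbc[\gamma_1^{\pm 1},\dots,\gamma_N^{\pm 1}]$, and the commutators then reduce to manifest algebraic identities among the $\gamma$'s (with the occasional conjugation $\mu_j\gamma_j^{\pm 1}=q^{\mp 1}\gamma_j^{\pm 1}\mu_j$ to normalize the answer). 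Your plan instead evaluates both sides on the monomial basis of $\Cp_N$. The two routes are essentially dual: after normal ordering, the paper compares Laurent polynomials in the $\gamma$'s, while you compare the eigenvalues of these diagonal operators on each $x^\alpha$. Your telescoping formulas in the $q^m$-integers are exactly the evaluated versions of the algebraic identities the paper uses, so the computations coincide term by term.

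One point you should address explicitly: checking equality in $\End(\Cp_N)$ settles equality in $\Cw_q(N)$ only if the representation is faithful (or at least faithful on the relevant subalgebra). The paper never asserts faithfulness, and indeed the realization fails to be faithful when $q$ is a root of unity (e.g.\ $\gamma_j^r-1$ acts as zero if $q^r=1$). The proposition is stated for any $q\neq\pm 1$ with $[m]_q\neq 0$, so your argument as written covers only $q$ not a root of unity. This is easy to repair: either observe that after normal ordering both sides lie in the Laurent subalgebra of $\gamma$'s, where equality of the corresponding eigenvalue functions $\alpha\mapsto q^{\langle\cdot,\alpha\rangle}$ for $q$ transcendental forces equality of the Laurent polynomials, and then specialize $q$; or note that each identity, with $[m]_q$ and $[2]_q$ cleared, is a polynomial identity in $q,q^{-1}$ with integer coefficients which you have verified for infinitely many $q$, hence holds identically. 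The paper's purely algebraic route sidesteps this issue entirely, which is the one real advantage of its approach; otherwise the two methods are of comparable length and difficulty.
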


\begin{proof}
    These relations are obtained by straightforward computations using \eqref{eq:qWeylRelGam}--\eqref{eq:powerdiff}.
    \details{ 

    For the first identity, we compute directly using \eqref{eq:qWeylRelCom2dif} that
    \[
    [\partial_j,\mu_j^2]_{q^{\pm 2}} = \{q\gamma_j\}_q\mu_j - q^{\pm 2}\mu_j\{\gamma_j\}_q
    = \{q\gamma_j\}_q\mu_j - q^{\pm 2}\{q^{-1}\gamma_j\}_q\mu_j
    = q^{\pm 1}[2]_q\gamma_j^{\mp 1}\nu.
    \]
    The second identity is derived in an entirely analogous fashion.
    Note that the third identity for $m=1$ is \eqref{eq:qWeylRelCom2}. For $m>1$, using \eqref{eq:powerdiff},  we have
    \begin{equation}\label{eq:genProb}
    \partial_{j,q^m}\mu_j = \frac{1}{x_j}\frac{\gamma_j^m-\gamma_j^{-m}}{q^m-q^{-m}}\mu_j = \{q^m\gamma_j^m\}_{q^m},  
    \end{equation}
     and hence $[\partial_{j,q^m},\mu_j]_{q^{\pm m}} = \{q^m\gamma_j^m\}_{q^m} - q^{\pm m}\{\gamma_j^m\}_{q^m}=\gamma_j^{\mp m}$, proving \eqref{eq:CommId1}. For the forth identity, using \eqref{eq:genProb} we compute
    \[
    [\mu_i\partial_{j,q^m},\mu_j\partial_{i,q^m}]=\{\gamma_i^m\}_{q^m}\{q^m\gamma_j^m\}_{q^m}-\{q^m\gamma_i^m\}_{q^m}\{\gamma_j^m\}_{q^m} = \{\gamma_i^m\gamma_j^{-m}\}_{q^m},
    \]
    while for the fifth identity we have
    \[
    [\partial_{i,q^m}\partial_{j,q^{m}},\mu_i\mu_j]=\{q^m\gamma_i^m\}_{q^m}\{q^m\gamma_i^m\}_{q^m} - \{\gamma_i^m\}_{q^m}\{\gamma_j^m\}_{q^m} = \{q^m\gamma_i^m\gamma_j^m\}_{q^m}.
    \]
    For the last two relations, we first compute
    \begin{align*}
     [\mu_i\partial_j,\mu_j\partial_{i,q^2}]_{q} &= \{\gamma_i^2\}_{q^2}\{q\gamma_j\}_q - q\{q^2\gamma_i^2\}_{q^2}\{\gamma_j\}_q\\
     &=\frac{[2]_q^{-1}}{(q-q^{-1})^2}\left((\gamma_i^2-\gamma_i^{-2})(q\gamma_j-(q\gamma_j)^{-1})-q((q\gamma_i)^2-(q\gamma_i)^{-2})(\gamma_j-\gamma_j^{-1})
     \right)\\
     &=\frac{1}{q- q^{-1}}\left(q\gamma_i^{2}\gamma_j^{-1} - \frac{q^{2}\gamma_i^{2}\gamma_j}{[2]_q} - \frac{\gamma_i^{- 2}\gamma_j}{[2]_q}\right).
    \end{align*}
    The other three cases are similar.}
\end{proof}

\section{The quantized enveloping algebra of \texorpdfstring{$\lie{sl}_2$}{sl2}}\label{s:Uqsl2}
Given $q\in \mathbb C^\times$ recall that the quantized universal enveloping algebra $\mathcal U_q(\mathfrak{sl}_2)$ is the associative algebra (with $1$) generated by $e,f,k^{\pm 1}$, subject to the following relations:
\begin{gather}kk^{-1} = 1 = k^{-1}k, \label{krel}\\
kek^{-1} = q^2e, \ \ \ kfk^{-1} = q^{-2}f, \label{wrel}\\
ef-fe = \dfrac{k-k^{-1}}{q-q^{-1}}. \label{efrel} 
\end{gather}

There is a Hopf algebra structure on $\mathcal U_q(\mathfrak{sl}_2)$ where the comultiplication $\Delta$, antipode $S$ and counit $\epsilon$ are given by 
\begin{gather*}
    \Delta(e) = e\otimes k + 1\otimes e, \ \ \Delta(f) = f\otimes 1 + k^{-1}\otimes f, \ \ 
    \Delta(k^{\pm 1})= k^{\pm 1}\otimes k^{\pm 1},\\
    S(e)= -ek^{-1}, \ \ S(f) = -kf, \ \ S(k^{\pm 1})=k^{\mp 1},\\
       \epsilon(e)= 0=\epsilon(f), \ \ \epsilon(k^{\pm 1})=1. \end{gather*}
       
We abbreviate $\mathcal U_q=\mathcal U_q(\lie{sl}_2)$ and denote by $\mathcal U_q^-$, $\mathcal U_q^0$, and $\mathcal U_q^+$ the subalgebras of $\Cu_q$ generated by $e$, $k^{\pm 1}$ and $f$, respectively. Moreover, the multiplication establishes an isomorphism of vector spaces
\begin{equation}\label{e:triangular}
    \mathcal U_q \cong \mathcal U_q^+\otimes \mathcal U_q^0\otimes \mathcal U_q^-.
\end{equation}

Recalling the notation $\{a\}_q = (a-a^{-1})/(q-q^{-1})$ for any invertible element $a$ in an associative algebra, note that, in $\Cu_q$ we have
\[
[e,f] = \frac{k-k^{-1}}{q-q^{-1}} = \{k\}_q.
\]

\begin{lemma}\label{l:FEcommutations}
    Given $0\leq r\leq d$, the following identities hold in $\Cu_{q}$:
    \begin{gather}
        [e^d,f] = e^{d-1}[d]_q\{q^{d-1}k\}_q, \label{FErel}\\
    f^re^d \equiv (-1)^r\frac{[d]_q!}{[d-r]_q!}e^{d-r}\prod_{\ell=1}^r\{q^{d-\ell}k\}_q\mod \Cu_q f. \label{e:FEcommut}
    \end{gather}
\end{lemma}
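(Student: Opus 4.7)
The plan is to establish \eqref{FErel} first by induction on $d$, and then use it to derive \eqref{e:FEcommut} by a second induction on $r$.

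For \eqref{FErel}, the base case $d=1$ is precisely the defining relation \eqref{efrel}. For the inductive step, I would write
\[
[e^{d+1},f] = e\,[e^d,f] + [e,f]\,e^d,
\]
apply the inductive hypothesis to the first summand, and use \eqref{wrel} (equivalently, $ke=q^2ek$, so $k^{-1}e=q^{-2}ek^{-1}$) to move $e^d$ past $\{k\}_q$ as $\{k\}_q e^d = e^d\{q^{2d}k\}_q$. Factoring $e^d$ on the left, the claim reduces to the scalar identity
\[
[d]_q\,\{q^{d-1}k\}_q + \{q^{2d}k\}_q = [d+1]_q\,\{q^d k\}_q,
\]
which I would verify directly by clearing the common denominator $(q-q^{-1})^2$ and comparing the coefficients of $k$ and $k^{-1}$ on both sides.

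For \eqref{e:FEcommut}, the base case $r=0$ is tautological (empty product and $[d]_q!/[d]_q!=1$). Assuming the congruence for some $r$ with $r+1\leq d$, I would compute
\[
f^{r+1}e^d = f\cdot(f^r e^d) \equiv (-1)^r\frac{[d]_q!}{[d-r]_q!}\,f\,e^{d-r}\prod_{\ell=1}^{r}\{q^{d-\ell}k\}_q \pmod{\Cu_q f},
\]
so the crucial step is to move $f$ past $e^{d-r}$ modulo $\Cu_q f$. Writing $fe^{d-r} = e^{d-r}f - [e^{d-r},f]$ and noting that $e^{d-r}f \in \Cu_q f$, one applies \eqref{FErel} to obtain
\[
fe^{d-r} \equiv -e^{d-r-1}[d-r]_q\{q^{d-r-1}k\}_q \pmod{\Cu_q f}.
\]
Substituting this back, using $[d-r]_q\cdot[d-r-1]_q! = [d-r]_q!$ to collapse the factorial, and absorbing the new factor $\{q^{d-(r+1)}k\}_q$ into the product (all $\{q^a k\}_q$ are Laurent polynomials in $k$ and hence commute with one another) yields the statement for $r+1$.

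I do not foresee a genuine obstacle; both arguments are routine once the $k$-weights are tracked carefully. The one point worth emphasizing is the reduction ``$e^{d-r}f\in\Cu_q f$'', which relies on interpreting $\Cu_q f$, via the triangular decomposition \eqref{e:triangular}, as the left ideal consisting of those elements whose PBW expansion has no term of $f$-degree zero; in particular, the cosets in $\Cu_q/\Cu_q f$ are represented by polynomials in $e$ and $k^{\pm 1}$ alone, which makes the right-hand side of \eqref{e:FEcommut} a well-defined representative of the coset of $f^r e^d$.
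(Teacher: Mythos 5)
Your proposal is correct and follows essentially the same strategy as the paper. The paper proves \eqref{FErel} by the one-shot telescoping identity $[e^d,f]=\sum_{r=1}^d e^{d-r}\{k\}_q e^{r-1}$, pushing each $e^{r-1}$ past $\{k\}_q$ and summing the resulting geometric progression; you instead induct on $d$ via $[e^{d+1},f]=e[e^d,f]+[e,f]e^d$ and reduce to the $k$-coefficient identity $[d]_q\{q^{d-1}k\}_q+\{q^{2d}k\}_q=[d+1]_q\{q^dk\}_q$. These are the same computation packaged differently, and your scalar identity does check out by comparing the coefficients of $k^{\pm1}$. For \eqref{e:FEcommut} both you and the paper induct on $r$, peeling one $f$ off at a time and applying \eqref{FErel} to $f e^{d-r}$ modulo $\Cu_q f$; the paper phrases it through the commutator $[f^{r},e^d]$ after first noting $e^d f^r\in\Cu_q f$, while you work with $f^r e^d$ directly, which is a purely cosmetic difference (one needs only that $\Cu_q f$ is a left ideal, which you correctly use). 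Your closing remark about the PBW interpretation of $\Cu_q/\Cu_q f$ via \eqref{e:triangular} is a sound sanity check, though not strictly needed for the argument.
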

\begin{proof}
    Equation \eqref{FErel} is a straightforward computation using the relation $[e,f] = \{k\}$. 
    \details{ In fact, we have
    \begin{align*}
        [e^d,f] &= \sum_{r=1}^d e^{d-r}\{k\}_qe^{r-1}\\
        &=e^{d-1}\left(\sum_{r=1}^d \{q^{2r-2}k\}_q\right)\\
        &=\frac{e^{d-1}}{q-q^{-1}}\left(k\frac{(q^{2d}-1)}{(q^2-1)} - k^{-1}\frac{(1-q^{-2d})}{(1-q^{-2})}\right)\\
        &=\frac{e^{d-1}}{q-q^{-1}}\left(q^{d-1}k[d]_q - (q^{(d-1)}k)^{-1}[d]_q\right)\\
        &=[d]_qe^{d-1}\{q^{d-1}k\}_q.
    \end{align*}} 
    To prove \eqref{e:FEcommut} first note it clearly holds for $r=0$ and hence, from now on we assume $r\geq 1$.  Since
    $$f^r e^d = e^df^r + [f^r,e^d] \equiv [f^r,e^d] \mod \Cu_q f,$$
    showing \eqref{e:FEcommut} is equivalent to show that for $1\leq r\leq d$ we have 
    \begin{equation*}\label{e:commFE}[f^r,e^d] \equiv (-1)^re^{d-r}\prod_{\ell=1}^r[d-\ell+1]_q\{q^{d-\ell}k\}_q\  \mod \Cu_q f.\end{equation*}
    We proceed by induction on $r$ which begins when $r=1$ by \eqref{FErel}. For the inductive step suppose $1<r<d$. Therefore, using the inductive hypothesis we have 
    $$[f^{r+1}, e^d] = f[f^r, e^d] + [f^r, e^d]f \equiv (-1)^rfe^{d-r}\prod_{\ell=1}^r[d-\ell+1]_q\{q^{d-\ell}k\}_q \ \mod \Cu_q f,$$
    and hence, a further application of \eqref{FErel} gives, modulo $\Cu_q f$, that 
    \begin{align*}[f^{r+1},e^d] &\equiv (-1)^{r}(-e^{d-r-1}[d-r]_q\{q^{d-r-1}_qk\})\prod_{\ell=1}^r[d-\ell+1]_q\{q^{d-\ell}k\}_q\\
         &\equiv (-1)^{r+1}e^{d-(r+1)}\prod_{\ell=1}^{r+1}[d-\ell+1]_q\{q^{d-\ell}k\}_q,
    \end{align*}
    which completes the proof of the lemma. 
\end{proof}

\subsection{Adjoint actions}
Given $x\in\Cu_q$, we use Sweedler's notation and write $\Delta(x) = \sum{x_{(1)}}\otimes x_{(2)}$. Using the Hopf algebra structure, the left adjoint action of $\Cu_q$ on itself (see \cite{BNP09} and the references therein) is defined by
\[
\ad_x(a) = \sum x_{(1)}aS(x_{(2)}).
\]
In particular we get the formulas
\begin{equation}\label{eq:adjointact}
\begin{aligned}
    \ad_e(a) &= eak^{-1} - aek^{-1}&\qquad
    \ad_f(a) &= fa - k^{-1}akf\\
    \ad_k(a) &= kak^{-1}&\qquad
    \ad_{k^{-1}}(a) &= k^{-1}ak.
\end{aligned}
\end{equation}
More generally, let $\Ca$ be any associative algebra and $\sigma:\Cu_q\to \Ca$ a homomorphism of algebras.
For any $x\in\Cu_q$ and $a\in \Ca$, the formula
\[
    \ad^\sigma_x(a) = \sum \sigma(x_{(1)})a\sigma(S(x_{(2)}))
\]
defines a representation of $\Cu_q$ on $\Ca$, which will  be referred to as the adjoint representation of $\Cu_{q}$ on $\Ca$. To ease notation, this representation may sometimes be denoted only by $\ad_x(a)$ instead of $\ad_x^\sigma(a)$.

\subsection{Representation theory}

Given a $\mathcal U_q$-module $V$, recall that $V$ is a weight module if 
$$V= \bigoplus_{\mu\in \mathbb K^\times} V_\mu, \ \ V_\mu = \{v\in V : kv = \mu v\}.$$
Whenever $V_\mu\neq 0$ we say that $\mu$ is a weight of $V$ and $V_\mu$ is the associated weight space of weight $\mu$. Every nonzero vector of $V_\mu$ is called a weight vector of weight $\mu$. Moreover, for $\mu\in \mathbb C$ we have
$$eV_\mu \subset V_{\mu q^2} \ \ \ {\rm and} \ \ \ fV_\mu \subset V_{\mu q^{-2}}.$$
If $v$ is weight vector and $fv=0$, then $v$ is said to be a lowest-weight vector. In light of \eqref{e:triangular}, if $V$ is generated by a lowest-weight vector $v$ then $V=\mathcal U_q^+v$ and $V$ is said to be a lowest weight module.\\ 

Given $\lambda\in \mathbb C^\times$ let $M_q(\lambda)$ be the quotient of $\mathcal U_q$ by left ideal generated by $f$ and $k-\lambda$. Clearly $M_q(\lambda)$ is a lowest weight module of weight $\lambda$ and $M_q(\lambda)$ has a unique irreducible quotient $L_q(\lambda)$. The following result is well known (see, for instance \cite{CP94}).
\begin{proposition} 
\begin{enumerate}\label{p:irredclas}
    \item [(i)] The module $M_q(\lambda)$ is reducible if and only if $\lambda = \pm q^{r}$, for some $r\in \mathbb Z_{\leq 0}$. In this case we have $\dim L_q({\pm q^{r}})=-r+1$. 
    \item[(ii)] If $V$ is a finite dimensional irreducible $\mathcal U_q$--module then $V\cong L_q(\sigma q^r)$ for some $\sigma\in\{-1,1\}$ and $r\in \mathbb Z_{\leq 0}$. \qed
\end{enumerate}
\end{proposition}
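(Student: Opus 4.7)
The plan is to prove part (i) directly from the definition of $M_q(\lambda)$ together with the commutation formula of Lemma \ref{l:FEcommutations}, and then deduce part (ii) by producing a lowest-weight vector in any finite-dimensional irreducible module and invoking part (i) to pin down its weight.

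For (i), I would first observe that by \eqref{e:triangular} the module $M_q(\lambda)$ has basis $\{e^n v_\lambda\}_{n\geq 0}$, where $v_\lambda$ is the image of $1$, and that the weights $\lambda q^{2n}$ are pairwise distinct since $q$ is not a root of unity. Hence $M_q(\lambda)$ is a weight module, every submodule is a sum of weight spaces, and every submodule is generated by a vector of the form $e^n v_\lambda$ annihilated by $f$. Using \eqref{FErel} together with $f v_\lambda = 0$ and $k v_\lambda = \lambda v_\lambda$, one computes
\[
 f e^n v_\lambda \;=\; -[n]_q\,\{q^{n-1}\lambda\}_q\, e^{n-1} v_\lambda
\]
for $n \geq 1$. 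Since $[n]_q \neq 0$, this vanishes iff $(q^{n-1}\lambda)^2 = 1$, i.e.\ $\lambda = \pm q^{1-n}$; writing $r = 1-n \leq 0$ we recover $\lambda = \pm q^r$. The unique maximal submodule is then $\mathcal{U}_q \cdot e^{1-r} v_\lambda = \bigoplus_{j \geq 1-r} \mathbb{C}\, e^j v_\lambda$, and the irreducible quotient $L_q(\pm q^r)$ has basis the images of $v_\lambda, e v_\lambda, \ldots, e^{-r} v_\lambda$, giving $\dim L_q(\pm q^r) = -r + 1$.

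For (ii), I would start from a nonzero finite-dimensional irreducible $V$. Since $k$ acts linearly on a finite-dimensional $\mathbb{C}$-vector space it admits an eigenvector $v_0$ with eigenvalue $\mu \in \mathbb{C}^\times$; the relation $k f = q^{-2} f k$ implies that each nonzero $f^j v_0$ is a $k$-eigenvector of eigenvalue $\mu q^{-2j}$, and these eigenvalues are all distinct, so the nonzero $f^j v_0$ are linearly independent. Finite-dimensionality of $V$ forces the sequence $v_0, f v_0, f^2 v_0, \ldots$ to terminate, and the last nonzero term is a lowest-weight vector $w$ of some weight $\lambda$. The universal property of $M_q(\lambda)$ yields a surjection $M_q(\lambda) \twoheadrightarrow \mathcal{U}_q w$, which equals $V$ by irreducibility; finite-dimensionality of $V$ then forces $M_q(\lambda)$ to be reducible, so part (i) gives $\lambda = \sigma q^r$ with $\sigma \in \{\pm 1\}$, $r \in \mathbb{Z}_{\leq 0}$, and $V \cong L_q(\sigma q^r)$ as the unique irreducible quotient of $M_q(\lambda)$. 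The only genuinely computational point is the scalar identity for $f e^n v_\lambda$; once in hand, both parts of the proposition follow formally, and there is no real obstacle beyond being careful that the \emph{existence} of a lowest-weight vector in (ii) needs only a single $k$-eigenvector rather than the full semisimplicity of $k$ on $V$.
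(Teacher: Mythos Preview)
The paper does not actually prove this proposition: it is stated as well known, cited to \cite{CP94}, and closed with a \qed. Your argument is correct and is the standard one; the only place to be slightly more careful is in part (i), where you should note that for $\lambda=\pm q^{r}$ the index $n=1-r$ is the \emph{unique} $n\geq 1$ with $\{q^{n-1}\lambda\}_q=0$ (this uses once more that $q$ is not a root of unity), so that the span of $\{e^j v_\lambda\mid j\geq 1-r\}$ really is the unique maximal submodule and the dimension count for $L_q(\pm q^r)$ follows.
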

The next result on the decomposition of the tensor product when the weights are not integer powers of the quantum parameter is well known and the proof follows the same lines of its classical version. We provide a proof for convenience of the reader. Note that in the case when the weights are integral the situation becomes more complicated (see \cite{CMJ}).
\begin{lemma}\label{l:tpdecomp}
    Let $r\in \mathbb Z_{\leq 0}$ and  $\lambda \in \mathbb C^\times\setminus \{q^{\mathbb Z}\}$. Then 
    $$L_q(\lambda)\otimes L_q(q^r)\cong \bigoplus_{\ell=0}^{-r} L_q(\lambda q^{r+2\ell}).$$
\end{lemma}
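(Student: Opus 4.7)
The plan is to construct $m+1$ explicit lowest-weight vectors in $L_q(\lambda)\otimes L_q(q^r)$, where $m := -r \geq 0$, argue that each generates a simple submodule isomorphic to the corresponding summand on the right-hand side, and then close the argument by matching weight multiplicities. Let $v_\lambda \in L_q(\lambda)$ and $w_0 \in L_q(q^r)$ be lowest-weight generators and put $v_j := e^j v_\lambda$ for $j \geq 0$ and $w_\ell := e^\ell w_0$ for $0 \leq \ell \leq m$. Under the genericity hypothesis on $\lambda$, Proposition \ref{p:irredclas} gives that $\{v_j\}_{j \geq 0}$ is a basis of $L_q(\lambda) = M_q(\lambda)$, while $\{w_\ell\}_{0 \leq \ell \leq m}$ is a basis of $L_q(q^r)$. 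Applying \eqref{FErel} to $v_\lambda$ and to $w_0$ records the explicit $f$-actions
\[
f v_j = -[j]_q \{q^{j-1}\lambda\}_q\, v_{j-1}, \qquad f w_\ell = -[\ell]_q \{q^{r+\ell-1}\}_q\, w_{\ell - 1}.
\]

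For each $0 \leq \ell \leq m$, I seek a nonzero vector $u_\ell = \sum_{j=0}^{\ell} c_j\, v_j \otimes w_{\ell - j}$ of weight $\lambda q^{r + 2\ell}$ with $f u_\ell = 0$. Expanding $\Delta(f) = f \otimes 1 + k^{-1}\otimes f$ and collecting the coefficient of $v_i \otimes w_{\ell - i - 1}$, the condition $f u_\ell = 0$ reduces to the first-order recursion
\[
c_{j+1}\,[j+1]_q\,\{q^j\lambda\}_q \;=\; -\lambda^{-1}q^{-2j}\,[\ell - j]_q\,\{q^{r+\ell-j-1}\}_q\, c_j, \qquad 0 \leq j < \ell.
\]
Since $\lambda$ is generic enough that none of the quantities $\{q^j \lambda\}_q$ vanish, choosing $c_0 = 1$ propagates uniquely to a nonzero solution, and hence to a nonzero lowest-weight vector $u_\ell$. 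The submodule $U_\ell := \Cu_q\, u_\ell$ is then a cyclic lowest-weight module of weight $\lambda q^{r+2\ell}$, i.e.\ a quotient of the simple module $M_q(\lambda q^{r+2\ell})$ (using Proposition \ref{p:irredclas} again for genericity), and therefore $U_\ell \cong L_q(\lambda q^{r+2\ell})$.

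The submodules $U_0, \ldots, U_m$ have pairwise distinct lowest weights and hence sum directly inside $L_q(\lambda) \otimes L_q(q^r)$. To conclude, I will match weight-space dimensions: at $\lambda q^{r+2s}$ the tensor product has dimension
\[
\#\{(j,\ell) : j \geq 0,\ 0 \leq \ell \leq m,\ j + \ell = s\} = \min(s, m) + 1,
\]
and the right-hand side $\bigoplus_{\ell=0}^{m} L_q(\lambda q^{r+2\ell})$ contributes exactly the weights $\{\lambda q^{r+2\ell+2j} : j \geq 0\}$ from the $\ell$-th summand, giving the same count $\#\{\ell : 0 \leq \ell \leq \min(s, m)\}$ at each $\lambda q^{r+2s}$. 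Hence $\bigoplus_{\ell=0}^m U_\ell = L_q(\lambda) \otimes L_q(q^r)$. The main obstacle is keeping the recursion nondegenerate at every step: the division by $\{q^j \lambda\}_q$ is exactly where the genericity of $\lambda$ is indispensable, and the same assumption, through Proposition \ref{p:irredclas}, is what identifies each $U_\ell$ with the prescribed simple module; once those two ingredients are secured, the remainder is straightforward weight-multiplicity bookkeeping.
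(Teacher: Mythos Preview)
Your argument follows the same outline as the paper's: locate a lowest-weight vector at each weight $\lambda q^{r+2\ell}$ for $0\le\ell\le -r$, invoke Proposition~\ref{p:irredclas} to identify the cyclic submodule it generates with $L_q(\lambda q^{r+2\ell})$, observe that the sum is direct because the lowest weights are distinct, and finish by matching weight multiplicities. The one substantive difference is how the lowest-weight vectors are produced. The paper uses a pure existence argument: since $\dim M_{\lambda q^{r+2\ell}}=\min(\ell,-r)+1$, for $0\le\ell\le -r$ the map $f$ sends a space of dimension $\ell+1$ into one of dimension $\ell$ and therefore has nonzero kernel. You instead construct the vectors explicitly by solving a first-order recursion; this is more informative but costs you a division by $\{q^j\lambda\}_q$ at each step.

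That division is where your phrase ``$\lambda$ is generic enough'' slightly overclaims. One has $\{q^j\lambda\}_q=0$ iff $\lambda=\pm q^{-j}$, and the hypothesis $\lambda\notin q^{\mathbb Z}$ rules out $\lambda=q^{-j}$ but not $\lambda=-q^{-j}$, so strictly speaking your recursion can stall there. The paper's rank--nullity step avoids this particular obstruction. That said, both arguments appeal to Proposition~\ref{p:irredclas} for the irreducibility of $M_q(\lambda q^{r+2\ell})$, whose criterion is $\lambda q^{r+2\ell}\ne\pm q^s$ with $s\le 0$, so a fully careful treatment of $\lambda\in -q^{\mathbb Z}$ would require an extra word in either approach; in the paper's applications $\lambda$ is a half-integer power of the quantum parameter and the issue never arises.
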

\begin{proof}
Let $M=L_q(\lambda)\otimes L_q(q^r)$ and note that for $\ell\in \mathbb Z_{\geq 0}$ we have $\dim M_{\lambda q^{r+2\ell}}= \min\{\ell,-r\}+1$. 
In particular, for each $0\leq \ell\leq -r$ there exists a non zero vector $v_\ell\in M$ such that 
$$v_\ell \in M_{\lambda q^{r+2\ell}}\ \ \ {\rm and} \ \ \ f v_\ell=0.$$
Moreover, since $\lambda q^{r+2\ell}\notin q^{\mathbb Z}$, it follows from Proposition \ref{p:irredclas} that $\Cu_q v_\ell \cong L_q(\lambda q^{r+2\ell})$ for all $0\leq \ell\leq -r$. 
Using the fact that $L_q(\lambda q^{r+2\ell})$ is irreducible and its lowest weights are distinct for each $\ell$, we get 
$$L:=\bigoplus_{\ell=0}^{-r}L_q(\lambda q^{r+2\ell})\subseteq M,$$
and hence the lemma follows by noting that $\dim L_\mu = \dim M_{\mu}$ for all $\mu\in \mathbb C^\times$. 
\end{proof}

\begin{remark}
    Throughout this paper we shall be particularly interested in irreducible representations $L_q(\lambda)$ such that $\lambda = q^m$ with $m\in \frac12\mathbb Z$. Henceforth we shall abuse notation and simply write $L_q(m)$ for $L_q(q^m)$. 
\end{remark}

\section{Quantum metaplectic duality}\label{s:qDuality}

From this section onward, we assume that $q\in\bbc^\times$ is not a root of unity. Note that this assumption implies that all quantum integers $[m]_q$ will be nonzero. We start by describing a quantum analogue of the representation of $\mathfrak{sl}_2$ in the symmetric powers of its fundamental representation in terms of $q$-differential operators. We consider a particular situation of the $q$-coordinate rings discussed in \cite{ZH17} and in \cite[Chapter 9]{KS12}. 
    
    \begin{proposition}\label{p:natrep}
        The assignment $\sigma_{w}:\mathcal{U}_{q^2} \to \mathcal{W}_{q}(2)\subset \End(\mathcal{P}_2)$
        \begin{align*}
    \sigma_{w}(k) := K_w &= \gamma_1^2\gamma_2^{-2},\\
    \sigma_{w}(e) := E_w &= \mu_1\partial_{2,q^2},\\
    \sigma_{w}(f) := F_w &= \mu_2\partial_{1,q^2},
\end{align*}
defines a representation of $\mathcal{U}_{q^2}$ on $\mathcal{P}_2$. Moreover, for each $d\in \mathbb Z_{\geq 0}$ we have  
$$\mathcal P_2^d  \cong_{\mathcal U_{q^2}} L_{q^2}(-d).$$
    \end{proposition}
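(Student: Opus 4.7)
The plan is to verify the defining relations of $\mathcal{U}_{q^2}$ directly on the operators $K_w, E_w, F_w$, and then analyze the resulting action on the homogeneous components $\mathcal{P}_2^d$.

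First I would check the Hopf relations. The invertibility $K_w K_w^{-1}=1$ is immediate. For the commutation with $E_w$, note that $[m]_{\gamma_j}$ commutes with $\gamma_j$, so conjugation of $\partial_{j,q^m}$ by $\gamma_i$ obeys the same rule as conjugation of $\partial_j$; applying \eqref{eq:qWeylRelGamCom} twice gives $\gamma_1^2\mu_1\gamma_1^{-2}=q^2\mu_1$ and $\gamma_2^{-2}\partial_{2,q^2}\gamma_2^{2}=q^{2}\partial_{2,q^2}$, yielding $K_w E_w K_w^{-1}=q^4 E_w=(q^2)^2 E_w$, which is the required $kek^{-1}=(q^2)^{2}e$ relation for $\mathcal{U}_{q^2}$. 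An analogous computation handles $K_w F_w K_w^{-1}=q^{-4}F_w$. The only serious identity, $[E_w,F_w]=(K_w-K_w^{-1})/(q^2-q^{-2})$, is already packaged: taking $i=1$, $j=2$, $m=2$ in \eqref{eq:CommId2} of Proposition \ref{p:qWeylComm} gives
\[
[\mu_1\partial_{2,q^2},\mu_2\partial_{1,q^2}]=\{(\gamma_1\gamma_2^{-1})^2\}_{q^2}=\{K_w\}_{q^2},
\]
which is exactly the desired relation \eqref{efrel} with parameter $q^2$.

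For the module-theoretic statement I would work on a fixed homogeneous component $\mathcal{P}_2^d$. From the monomial action recalled after \eqref{eq:qWeylRelGam}, $K_w(x_1^ax_2^{d-a})=q^{2(2a-d)}x_1^ax_2^{d-a}$, so $\mathcal{P}_2^d$ decomposes into $d+1$ distinct weight spaces and in particular is stable under $K_w$. The operators $E_w$ and $F_w$ preserve total degree, so $\mathcal{P}_2^d$ is a $\mathcal{U}_{q^2}$-submodule. Next I would observe that $F_w x_2^d=\mu_2\partial_{1,q^2}(x_2^d)=0$, so $x_2^d$ is a lowest weight vector of weight $q^{-2d}=(q^{2})^{-d}$, and compute
\[
E_w^a(x_2^d)=c_a\, x_1^a x_2^{d-a}, \qquad c_a=\prod_{\ell=0}^{a-1}[d-\ell]_{q^2},
\]
which is nonzero because $q$ is not a root of unity. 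Hence $\mathcal{U}_{q^2}^+ x_2^d$ spans all of $\mathcal{P}_2^d$, so $\mathcal{P}_2^d$ is a lowest weight module of lowest weight $(q^2)^{-d}$ and dimension $d+1$.

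To conclude, $\mathcal{P}_2^d$ is a quotient of the Verma-type module $M_{q^2}((q^2)^{-d})$; by Proposition \ref{p:irredclas}(i) the unique irreducible quotient $L_{q^2}(-d)$ has dimension $-(-d)+1=d+1$, matching $\dim\mathcal{P}_2^d$. Therefore $\mathcal{P}_2^d\cong_{\mathcal{U}_{q^2}} L_{q^2}(-d)$. The only place that could cause trouble is the bookkeeping of $q$-powers in verifying the bracket relation, but this is entirely absorbed into \eqref{eq:CommId2}, so no real obstacle remains.
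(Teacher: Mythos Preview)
Your proposal is correct and follows essentially the same approach as the paper: you verify the $\Cu_{q^2}$-relations via the $q$-Weyl algebra identities (invoking \eqref{eq:CommId2} for the key bracket exactly as the paper does), identify $x_2^d$ as a lowest-weight vector of weight $(q^2)^{-d}$, and conclude by comparing dimensions with Proposition~\ref{p:irredclas}. The only difference is that you spell out cyclicity by computing $E_w^a(x_2^d)$ explicitly, whereas the paper jumps straight from the lowest-weight vector and the dimension count to the identification; this is a harmless elaboration rather than a different argument.
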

    \begin{proof}
        First note that 
        \[
        K_wE_w = \gamma_1^2\gamma_2^{-2}\mu_1\partial_{2,q^2} = (\gamma_1^2\mu_1)(\gamma_2^{-2}\partial_{2,q^2}) = q^4(\mu_1\gamma_1^2)(\partial_{2,q^2}\gamma_2^{-2}) = q^4E_wK_w.
        \]
        Similarly $K_wF_w = q^{-4}F_wK_w$. Next, using \eqref{eq:CommId2}, we compute
        \[
            E_wF_w - F_wE_w = [\mu_1\partial_{2,q^2},\mu_2\partial_{1,q^2}]
            =\{K_w\}_{q^2},
        \]
        as required for the first statement. For the moreover part, it is clear from the definition of $\sigma_w$ that $\mathcal P_2^d$ is a $\mathcal U_{q^2}$-submodule of $\mathcal P_2$. Further, since 
       $$F_w (x_2^d) = 0, \ \ K_w (x_2^d) = q^{-2d}x^d,\ \ {\rm and} \ \  \dim \mathcal P^d = d+1,$$
       it follows from Proposition \ref{p:irredclas} that $\mathcal P_2^d\cong L_{q^2}(-d)$.
       \end{proof}

Next we consider an action of  $\mathcal U_{q^2}$ on $\mathcal P_1$ through the quantum Weyl algebra $\mathcal W_q(1)$. To avoid confusion, we shall use a different notation in this case and write $\mathcal C_{s,q} = \mathcal W_q(1)$, generated by $\nu,\nabla,\omega^{\pm 1}$, instead of $\mu_1,\partial_1,\gamma_1^{\pm {1}}$, respectively, c.f. \eqref{eq:qweylgen}. The algebra $\Cc_{s,q}$ will be referred to as the quantized symplectic Clifford algebra. Further, we write $\mathbb S = \mathbb K[y]$, instead of $\Cp_1$, and we have $\mathbb S = \mathbb S_+\oplus \mathbb S_-$, with 
$$\mathbb S_+ = \mathbb K[y^2], \ \ \ \mathbb S_- = y\mathbb K[y^2].$$ 
The space $\bbs$ is called the space of symplectic spinors. The representation of $\Cc_{s,q}$ on $\bbs$ below is often referred to as the $q$-oscillator representation.

\begin{proposition}\label{p:spinrep}
        The assignment $\sigma_{s}:\mathcal{U}_{q^2} \to \mathcal{C}_{s,q}\subset \End(\mathbb S)$
        \begin{align*}
    \sigma_{s}(k) := K_s &= q\omega^2,\\
    \sigma_{s}(e) := E_s &= \frac{1}{[2]_q}\nu^2,\\
    \sigma_{s}(f) := F_s &= -\frac{1}{[2]_q}\nabla^2,
\end{align*}
defines a representation of $\mathcal{U}_{q^2}$ on $\mathbb S$. Moreover,
$$\mathbb S_+ \cong_{\mathcal U_{q^2}}L_{q^2}\left(\tfrac12\right), \ \ \ \mathbb S_- \cong_{\mathcal U_{q^2}}L_{q^2}\left(\tfrac32\right).$$
\begin{proof}
    The realization of the $q$-oscillator representation was discussed in \cite[p. 229]{NUW96}. For the moreover part it is clear from the definition of $\sigma_s$ that $\mathbb S_{\pm}$ are $\mathcal U_{q^2}$-submodules of $\mathbb S$. The result then follows from Proposition \ref{p:irredclas} by noting that
    \begin{gather*}
        F_s (1) = 0,\  \ \ K_s(1) = q = (q^2)^{\frac12}, \ \ \ \sigma_s(\mathcal U_{q^2}^+)(1) = \bbs_+,\\
        F_s (y) = 0,\  \ \ K_s(y) = q^3y = (q^2)^{\frac32}y, \ \ \ \sigma_s(\mathcal U_{q^2}^+)(y) = \bbs_-.
    \end{gather*}
    This finishes the proof.
\end{proof}
\end{proposition}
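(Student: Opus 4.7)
The plan is to split the proof into two parts: (i) verifying that $\sigma_s$ respects the defining relations of $\mathcal{U}_{q^2}$, and (ii) identifying the irreducible constituents $\mathbb{S}_\pm$.

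For (i), the scaling relations $K_sE_sK_s^{-1}=q^4E_s$ and $K_sF_sK_s^{-1}=q^{-4}F_s$ follow from two applications of \eqref{eq:qWeylRelGamCom}, since $K_s=q\omega^2$ and $E_s,F_s$ are quadratic in $\nu,\nabla$. The nontrivial relation $[E_s,F_s]=(K_s-K_s^{-1})/(q^2-q^{-2})$ amounts to an identity in $\mathcal{C}_{s,q}$ of the form $\nabla^2\nu^2-\nu^2\nabla^2=[2]_q^2(q\omega^2-q^{-1}\omega^{-2})/(q^2-q^{-2})$. My approach is to bring both $\nabla^2\nu^2$ and $\nu^2\nabla^2$ into closed form using \eqref{eq:qWeylRelCom2dif} together with $\omega\nu=q\nu\omega$ and $\omega\nabla=q^{-1}\nabla\omega$; this yields $\nabla^2\nu^2=\{q^2\omega\}_q\{q\omega\}_q$ and $\nu^2\nabla^2=\{q^{-1}\omega\}_q\{\omega\}_q$, both Laurent polynomials in $\omega$. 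The difference then simplifies, using $q^2-q^{-2}=(q-q^{-1})[2]_q$, to exactly the target. Alternatively, one may simply invoke the $q$-oscillator realization of \cite[p.~229]{NUW96}.

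For (ii), since $\nu$ and $\nabla$ shift $y$-degree by $\pm 1$ and $\omega$ preserves it, all three operators $K_s,E_s,F_s$ preserve the parity of $y$-degree, so $\mathbb{S}_+$ and $\mathbb{S}_-$ are $\mathcal{U}_{q^2}$-submodules. The vectors $1\in\mathbb{S}_+$ and $y\in\mathbb{S}_-$ are each annihilated by $F_s$ (using $\nabla(1)=0$ and $\nabla^2(y)=0$) and are $K_s$-eigenvectors of weights $q=(q^2)^{1/2}$ and $q^3=(q^2)^{3/2}$, respectively. Because $E_s$ is a nonzero scalar multiple of multiplication by $y^2$, one has $\sigma_s(\mathcal{U}_{q^2}^+)(1)=\mathbb{S}_+$ and $\sigma_s(\mathcal{U}_{q^2}^+)(y)=\mathbb{S}_-$, producing surjections $M_{q^2}(q^{1/2})\twoheadrightarrow \mathbb{S}_+$ and $M_{q^2}(q^{3/2})\twoheadrightarrow \mathbb{S}_-$. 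Since $q$ is not a root of unity, neither $q^{1/2}$ nor $q^{3/2}$ is of the form $\pm(q^2)^r$ with $r\in\mathbb{Z}_{\leq 0}$; Proposition~\ref{p:irredclas} then gives that both Verma modules are already irreducible, upgrading the surjections to the desired isomorphisms $\mathbb{S}_\pm\cong L_{q^2}(1/2),L_{q^2}(3/2)$.

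The main obstacle is the normal-ordering computation in part (i): one must carefully push commutators past $\omega$ to reveal that $\nabla^2\nu^2$ and $\nu^2\nabla^2$ are Laurent polynomials in $\omega$ alone (a feature of the rank-one $q$-oscillator that fails at higher rank). As a sanity check, both sides of that identity can be evaluated on the monomial basis $\{y^n\}$, reducing it to the elementary $q$-integer identity $[n+2]_q[n+1]_q-[n]_q[n-1]_q=[2]_q[2n+1]_q$.
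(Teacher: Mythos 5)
Your proof is correct, and for part (i) you take a more explicit route than the paper, which simply cites the $q$-oscillator realization in \cite[p.~229]{NUW96}. Your normal-ordering computation is worth having: the factorizations $\nabla^2\nu^2=\{q^2\omega\}_q\{q\omega\}_q$ and $\nu^2\nabla^2=\{q^{-1}\omega\}_q\{\omega\}_q$ follow directly from \eqref{eq:qWeylRelCom2dif} and the conjugation rules $\nabla\omega=q\omega\nabla$, $\nu\omega=q^{-1}\omega\nu$, and a short polynomial manipulation then yields
\[
\nabla^2\nu^2-\nu^2\nabla^2=\frac{[2]_q\,(q\omega^2-q^{-1}\omega^{-2})}{q-q^{-1}},
\]
which divided by $[2]_q^2$ gives $\{K_s\}_{q^2}$ as required. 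Your monomial-basis sanity check $[n+2]_q[n+1]_q-[n]_q[n-1]_q=[2]_q[2n+1]_q$ is also a correct and efficient way to verify the identity. For part (ii) your argument is the same as the paper's --- exhibit $1$ and $y$ as $F_s$-annihilated $K_s$-eigenvectors generating $\mathbb{S}_\pm$ over $\mathcal{U}_{q^2}^+$, then invoke Proposition~\ref{p:irredclas} --- though you spell out the Verma module surjection step which the paper leaves implicit. One small notational slip: the lowest weights are $q=(q^2)^{1/2}$ and $q^3=(q^2)^{3/2}$, so the Verma modules should be written $M_{q^2}(q)$ and $M_{q^2}(q^3)$ rather than $M_{q^2}(q^{1/2})$ and $M_{q^2}(q^{3/2})$; you clearly had the right weights in mind, but the subscript $q^2$ in the paper's convention indicates that the argument is the actual weight, not its exponent base $q$.
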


The tensor product of these two representations $\Cp(\bbs)=\Cp_2\otimes \bbs$ will be called the space of polynomial symplectic spinors.
With respect to the homogeneous degrees of $\mathcal P_2$, we have the decomposition 
\begin{equation}\label{eq:PolySpinDecomp}
\mathcal P(\mathbb S) =\bigoplus_{d\geq 0}\mathcal P(\mathbb S)^d,\ \ \ {\rm with} \ \ \ \mathcal P(\mathbb S)^d= \mathcal P_2^d\otimes \mathbb S.
\end{equation}
Set also $\mathcal P(\mathbb S)^d_\pm = \mathcal P_2^d\otimes \mathbb S_{\pm}$.  The following result is straightforward from the Hopf algebra structure of $\mathcal{U}_{q^2}$, Propositions \ref{p:natrep} and \ref{p:spinrep}, and Lemma \ref{l:tpdecomp}. 
\begin{proposition}\label{p:deltaact}
    The assignment $\sigma_{\Delta}:\mathcal{U}_{q^2}\to \mathcal{W}_{q}(2)\otimes\mathcal{C}_{s,q}\subseteq \End(\Cp(\bbs))$ given by
    \begin{align*}
        \sigma_{\Delta}(k) = \sigma_{w}(k)\otimes \sigma_{s}(k) := K_{\Delta} &= q\gamma_1^2\gamma_2^{-2}\omega^2,\\
        \sigma_{\Delta}(e) = \sigma_{s}(e)\otimes \sigma_{w}(k) + 1\otimes\sigma_{s}(e) := E_{\Delta} &= q\omega^2\mu_1\partial_{2,q^2}+\frac{\nu^2}{[2]_q},\\
        \sigma_{\Delta}(f) = \sigma_{w}(f)\otimes 1 + \sigma_{w}(k^{-1})\otimes \sigma_{s}(f) := F_{\Delta} &= \mu_2\partial_{1,q^2}-\frac{\gamma_1^{-2}\gamma_2^{2}}{[2]_q}\nabla^2,
    \end{align*}
    extends to a representation of $\mathcal{U}_{q^2}$ on $\mathcal P(\mathbb S)$. Moreover, as $\Cu_{q^2}$-modules we have 
    \begin{gather*}\mathcal P(\bbs)\cong \bigoplus_{d\geq 0}\left(\mathcal P(\bbs)^d_+\oplus \mathcal P(\bbs)^d_-\right),\ \ \ {\rm and}\\
    \Cp(\bbs)^d_+ \cong_{\Cu_{q^2}} L_{q^2}(\tfrac{1}{2}-d) \oplus L_{q^2}(\tfrac{1}{2}-d+2) \oplus \cdots \oplus L_{q^2}(\tfrac{1}{2}+d), \\
        \Cp(\bbs)^d_- \cong_{\Cu_{q^2}} L_{q^2}(\tfrac{3}{2}-d) \oplus L_{q^2}(\tfrac{3}{2}-d+2) \oplus \cdots \oplus L_{q^2}(\tfrac{3}{2}+d).\hfil\qed
    \end{gather*} 
\end{proposition}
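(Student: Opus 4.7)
The plan for the first assertion is essentially formal: set $\sigma_\Delta := (\sigma_w \otimes \sigma_s) \circ \Delta$. Since $\sigma_w$ and $\sigma_s$ are algebra homomorphisms by Propositions \ref{p:natrep} and \ref{p:spinrep}, and $\Delta$ is one as part of the Hopf structure on $\Cu_{q^2}$, the composite is automatically an algebra homomorphism $\Cu_{q^2} \to \Cw_q(2) \otimes \Cc_{s,q}$, so only the explicit formulas remain to be verified. These I would check by evaluating $(\sigma_w \otimes \sigma_s)(\Delta(x))$ for $x \in \{k, e, f\}$ using $\Delta(k) = k\otimes k$, $\Delta(e) = e\otimes k + 1\otimes e$, and $\Delta(f) = f\otimes 1 + k^{-1}\otimes f$, and then identifying $A\otimes B \in \End(\Cp_2)\otimes\End(\bbs)$ with the composition $AB$ in $\End(\Cp_2\otimes\bbs)$ (valid since operators on different tensor factors commute).

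For the decomposition part, I would first verify that each $\Cp(\bbs)^d_\pm = \Cp_2^d \otimes \bbs_\pm$ is $\sigma_\Delta$-stable. The mixing terms in $E_\Delta$ and $F_\Delta$ (those containing $\mu_1\partial_{2,q^2}$ or $\mu_2\partial_{1,q^2}$) preserve the total polynomial degree, while the purely spinorial terms ($\nu^2/[2]_q$ and $\gamma_1^{-2}\gamma_2^{2}\nabla^2/[2]_q$) preserve spinor parity; $K_\Delta$ acts diagonally. Hence both the polynomial degree $d$ and the spinor parity are preserved, reducing the task to identifying $\Cp_2^d \otimes \bbs_\pm$ as a $\Cu_{q^2}$-module for each $d \geq 0$.

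For this identification I would invoke Lemma \ref{l:tpdecomp} with quantum parameter $q^2$. By Propositions \ref{p:natrep} and \ref{p:spinrep}, $\Cp_2^d \cong L_{q^2}(-d)$ is finite-dimensional of dimension $d+1$, and $\bbs_\pm \cong L_{q^2}(\tfrac12)$ or $L_{q^2}(\tfrac32)$ with lowest weights $q$ and $q^3$, respectively. Since $q$ is not a root of unity, $q$ and $q^3$ lie outside $(q^2)^{\bbz}$, so the lemma's genericity hypothesis is satisfied. Applying it with $r = -d$ and $\lambda = (q^2)^{1/2}$ (respectively $(q^2)^{3/2}$) then yields
\[
\Cp_2^d \otimes \bbs_+ \;\cong\; \bigoplus_{\ell=0}^d L_{q^2}\!\left(\tfrac{1}{2} - d + 2\ell\right), \qquad \Cp_2^d \otimes \bbs_- \;\cong\; \bigoplus_{\ell=0}^d L_{q^2}\!\left(\tfrac{3}{2} - d + 2\ell\right),
\]
which is exactly the claimed decomposition once the range $\ell = 0, \ldots, d$ is unravelled. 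I do not anticipate a substantive obstacle: the only minor wrinkle is that Lemma \ref{l:tpdecomp} is stated with the finite-dimensional factor on the right, whereas the natural order here has it on the left; but the lemma's proof only uses weight-space dimension counts and the production of lowest-weight vectors, arguments which are insensitive to the tensor order.
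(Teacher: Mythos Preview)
Your proposal is correct and follows exactly the route the paper indicates: the paper states that the result is ``straightforward from the Hopf algebra structure of $\mathcal{U}_{q^2}$, Propositions \ref{p:natrep} and \ref{p:spinrep}, and Lemma \ref{l:tpdecomp}'', and you have spelled out precisely those ingredients, including the check that $\Cp(\bbs)^d_\pm$ is $\sigma_\Delta$-stable and the genericity hypothesis $\lambda\notin (q^2)^{\bbz}$ needed for the lemma. Your remark about the tensor order in Lemma \ref{l:tpdecomp} is apt and your justification that the proof is insensitive to this order is sound.
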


Now that we have established the diagonal action $\sigma_\Delta$ of $\Cu_{q^2}$ on $\Cp(\bbs)$, the next step will be to determine a quantum analogue of its dual pair. In what follows, inspired by the classical duality \cite{DBSS14,DBHS17}, we make an educated guess and search for elements $E, F$ of the type
\begin{equation}\label{eq:DualPairAnsatz}
\begin{aligned}
    E &= c_1\mu_1\nabla + c_2\mu_2\nu,\\
    F &= c_3\partial_{1,q^2}\nu + c_4\partial_{2,q^2}\nabla,\\
\end{aligned}
\end{equation}
where the coefficients $c_{\ell}$ for $\ell=1,2,3,4$ are Laurent polynomials $c_\ell = c_\ell(\gamma_1^\pm,\gamma_2^{\pm},\omega^{\pm})$ on the degree variables of the $q$-Weyl algebra. We will, in fact, suppose that each $c_\ell$ is a monomial of the type $c_\ell = p_\ell\gamma_1^{x_\ell}\gamma_2^{y_\ell}\omega^{z_\ell}$ for exponents $x_\ell,y_\ell,z_\ell\in \bbz$ and for a polynomial $p_\ell$ on $q,q^{-1}$. We remark that in $\Cw_q(2)$ the shifted $q$-derivative $\partial_{j,q^2}$ does not commute with $\partial_j$, so the best hope in finding differential operators commuting with $\sigma_\Delta$ is to consider the shifted $q$-derivatives as in the expression for $F$ in \eqref{eq:DualPairAnsatz}. If we impose that the elements in \eqref{eq:DualPairAnsatz} commute with $\sigma_\Delta$ and solving the equations for the variables that we have introduced, we obtain elements in $\Cw_q(2)\otimes\Cc_{s,q}$ whose commutator is the correct analogue for the degree operator discussed in \cite{DBSS14} (although we have normalized the $\lie{sl}_2$-triple so that it is more amenable to the quantization process). This leads to the following result.

\begin{theorem}\label{thm:spinact}
    The assignment $\sigma:\mathcal{U}_{q} \to \mathcal{W}_{q}(2)\otimes\mathcal{C}_{s}$ given by
    \begin{align*}
    \sigma(k) := K &= q^2\gamma_1^2\gamma_2^2,\\
    \sigma(e) := E &= [2]_q(q^2\gamma_2^2\omega\mu_1\nabla + \mu_2\nu),\\
    \sigma(f) := F &= \partial_{1,q^2}\nu - \gamma_1^{-2}\omega\partial_{2,q^2}\nabla,
    \end{align*}
    extends to a representation of $\mathcal{U}_{q}$ on $\mathcal{P}_2\otimes\bbs$ which commutes with the representation of $\mathcal{U}_{q^2}$ generated by $E_\Delta,F_\Delta$. 
\end{theorem}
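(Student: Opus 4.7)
The plan is to verify two claims: first, that $K, E, F$ satisfy the defining relations \eqref{krel}--\eqref{efrel} of $\Cu_q(\lie{sl}_2)$, so that $\sigma$ is a well-defined homomorphism; second, that each of $K, E, F$ commutes with each of $K_\Delta, E_\Delta, F_\Delta$, hence with the full image of $\sigma_\Delta$. Since $K = q^2\gamma_1^2\gamma_2^2$ and $K_\Delta = q\gamma_1^2\gamma_2^{-2}\omega^2$ both lie in the commutative subalgebra generated by $\gamma_1^{\pm 1}, \gamma_2^{\pm 1}, \omega^{\pm 1}$, the commutation $[K, K_\Delta] = 0$ is immediate.

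For the first claim, the weight relations $KEK^{-1} = q^2 E$ and $KFK^{-1} = q^{-2} F$ follow directly from \eqref{eq:qWeylRelGamCom}: $K$ commutes with every spinor-side operator, scales each $\mu_i$ by $q^2$, and scales each $\partial_{j,q^2}$ by $q^{-2}$, so every summand of $E$ transforms by $q^2$ and every summand of $F$ transforms by $q^{-2}$. The essential identity is $[E, F] = \{K\}_q$. Expanding $EF - FE$ produces four paired cross-terms. Those pieces whose polynomial factors touch distinct indices ($i \neq j$) split into a polynomial-side weight contribution from \eqref{eq:qWeylRelGamCom} and a spinor-side contribution governed by $\nabla\nu = \{\omega\}_q$ and $\nu\nabla = \{q^{-1}\omega\}_q$, as in \eqref{eq:qWeylRelCom2dif}. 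The diagonal pieces invoke \eqref{eq:CommId1} with $m = 2$, namely $\partial_{j,q^2}\mu_j - q^{\pm 2}\mu_j\partial_{j,q^2} = \gamma_j^{\mp 2}$. After collecting all contributions and cancelling the $[2]_q$ prefactor, the expression should reduce to $(K - K^{-1})/(q - q^{-1})$.

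For the second claim, the four further commutations $[K, E_\Delta]$, $[K, F_\Delta]$, $[E, K_\Delta]$, $[F, K_\Delta]$ all vanish by weight: each summand of $E_\Delta, F_\Delta$ has $\mu$- and $\partial$-degrees cancelling (so its net $K$-weight is zero), and each summand of $E, F$ has net $K_\Delta$-weight zero once one accounts for both the polynomial scaling by $\gamma_1^2\gamma_2^{-2}$ and the spinor scaling by $\omega^2$. This leaves the four substantive commutators $[E, E_\Delta], [E, F_\Delta], [F, E_\Delta], [F, F_\Delta]$. Each expands into four summands; those acting on distinct polynomial indices are handled by \eqref{eq:qWeylRelGamCom} together with \eqref{eq:qWeylRelCom2dif}, while those touching the same polynomial index invoke the mixed quadratic identities \eqref{eq:CommId4} and \eqref{eq:CommId5}. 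The $q^2$ shift appearing inside these identities is exactly what dictates the use of the shifted derivative $\partial_{j,q^2}$ rather than $\partial_j$ in the ansatz \eqref{eq:DualPairAnsatz}, and the coefficients $q^2\gamma_2^2\omega$ and $-\gamma_1^{-2}\omega$ in $E$ and $F$ are the outputs of the linear system forced by demanding that these on-diagonal contributions cancel. The principal obstacle is the volume of bookkeeping: one must systematically push all degree operators $\gamma_i^{\pm 1}, \omega^{\pm 1}$ to one side using \eqref{eq:qWeylRelGamCom}, carefully track the $q$-powers accumulated, and verify that the coefficients of the resulting normal-form monomials in $\mu, \partial, \nu, \nabla$ all collapse to zero.
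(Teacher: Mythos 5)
Your high-level strategy — verify the $\Cu_q$-relations and the commutation with $\sigma_\Delta$ directly in $\Cw_q(2)\otimes\Cc_{s,q}$, disposing of the easy cases by $\gamma/\omega$-weight counting and reducing the rest to the identities of Proposition~\ref{p:qWeylComm} — is the same as the paper's. However, you have assigned the identities to the wrong computations, and if executed as written the argument would stall.

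For $[E,F]=\{K\}_q$: writing $E$ as the sum of (degree-twisted) $\mu_1\nabla$ and $\mu_2\nu$, and $F$ as the sum of $\partial_{1,q^2}\nu$ and $\partial_{2,q^2}\nabla$, the two cross-terms with \emph{distinct} polynomial indices are $[\mu_1\nabla,\partial_{2,q^2}\nabla]$ and $[\mu_2\nu,\partial_{1,q^2}\nu]$. These carry \emph{matching} spinor operators ($\nabla\nabla$ or $\nu\nu$), not $\nabla\nu$; there is no spinor-side contribution of the kind you describe, and after pushing the degree operators through they vanish identically. The non-trivial content sits entirely in the two \emph{same}-polynomial-index terms $[\mu_1\nabla,\partial_{1,q^2}\nu]$ and $[\mu_2\nu,\partial_{2,q^2}\nabla]$, which simultaneously involve the $\mu_j\leftrightarrow\partial_{j,q^2}$ reordering and the $\nu\leftrightarrow\nabla$ reordering. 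Your plan to handle them with \eqref{eq:CommId1} alone is insufficient: \eqref{eq:CommId1} only controls the polynomial-side twist and says nothing about the spinor side. It is precisely \eqref{eq:CommId4} and \eqref{eq:CommId5} — reading the spinor variable as a second index — that bundle both reorderings and produce the cancellation yielding $\{(q\gamma_1\gamma_2)^2\}_q$.

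Conversely, for $[E,E_\Delta]$, $[E,F_\Delta]$, $[F,E_\Delta]$, $[F,F_\Delta]$, the operator shapes do not match \eqref{eq:CommId4}/\eqref{eq:CommId5}: e.g.\ $[F_\Delta,E]$ reduces, after peeling off the commuting factors, to a twisted commutator $[\partial_{1,q^2},\mu_1]_{q^2}$ on the polynomial side (this is \eqref{eq:CommId1} with $m=2$) and a twisted commutator $[\nabla^2,\nu]_{q^{-2}}$ on the spinor side (this is \eqref{eq:CommId0b}), and these two contributions cancel. So \eqref{eq:CommId0a}, \eqref{eq:CommId0b}, and \eqref{eq:CommId1} are the tools here, not \eqref{eq:CommId4}/\eqref{eq:CommId5}.

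Finally, your stated spinor-side relations are wrong: from \eqref{eq:qWeylRelCom2dif} one has $\nu\nabla=\{\omega\}_q$ and $\nabla\nu=\{q\omega\}_q$, not $\nabla\nu=\{\omega\}_q$ and $\nu\nabla=\{q^{-1}\omega\}_q$. This sign/ordering slip propagates into the coefficient bookkeeping and must be corrected before the computation can close.
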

\begin{proof}
    The proof will follow by using the relations in $\Cw_q(2)\otimes\Cc_{s,q}$ discussed in Section \ref{s:qWeyl}. We will sketch some of the computations. The crucial observations are that $q$-derivatives and multiplication operators on different directions commute in the $q$-Weyl algebra and that conjugation of a multiplication or $q$-derivative by the degree variables $\gamma_1,\gamma_2,\omega$ yield the same operator, but multiplied by a suitable power of $q$. That said, note that
    \begin{align*}
        [F_\Delta,E] &= [2]_q[\mu_2\partial_{1,q^2}, q^2\gamma_2^2\omega\mu_1\nabla] - [\gamma_1^{-2}\gamma_2^{2}\nabla^2,\mu_2\nu]\\
        &= [2]_q\gamma_2^2\omega\mu_2\nabla[\partial_{1,q^2},\mu_1]_{q^2}-\gamma_1^{-2}\gamma_2^{2}\mu_2[\nabla^2,\nu]_{q^{-2}}\\
        &=0,
    \end{align*}
    where we used \eqref{eq:CommId0b} and \eqref{eq:CommId1} in the last line. 
    Proving that $[F_\Delta,F]=0=[E_\Delta,E]=[E_\Delta,F]$ is done analogously and are left to the reader.
    Finally, we compute $[E,F] = B_1 - B_2$ with
    \begin{align*}
        B_1 &:= [2]_q[q^2\gamma_2^{2}\omega\mu_1\nabla,\partial_{1,q^2}\nu ],\\
        B_2 &:= [2]_q[\mu_2\nu,\gamma_1^{-2}\omega\partial_{2,q^2}\nabla].
    \end{align*}
    Using \eqref{eq:CommId4}, we get
    \begin{align*}
        B_1 &= q^{2}[2]_q\gamma_2^{2}\omega[\mu_1\nabla,\partial_{1,q^2}\nu]_{q^{-1}}\\
        &=\gamma_2^2\omega\frac{[2]_qq^{2}}{q-q^{-1}}\left(\frac{q^{-2}\gamma_1^{-2}\omega^{-1}}{[2]_q} + \frac{\gamma_1^{2}\omega^{-1}}{[2]_q}-q^{-1}\gamma_1^{-2}\omega\right)
    \end{align*}
    and, using \eqref{eq:CommId5},
    \begin{align*}
        B_2 &= [2]_q\gamma_1^{-2}\omega(-[\partial_{2,q^2}\nabla,\mu_2\nu]_{q^{-1}})\\
        &= \gamma_1^{-2}\omega\frac{[2]_q}{q-q^{-1}}\left(-q\gamma_2^{2}\omega+\frac{q^{-2}\gamma_2^{-2}\omega^{-1}}{[2]_q}+\frac{\gamma_2^{2}\omega^{-1}}{[2]_q}\right),
    \end{align*}
    so that $[E,F] = B_1-B_2 = \{(q\gamma_1\gamma_2)^2\}_q$. This finishes the proof.
\end{proof}

\begin{remark}
    The operator $F\in \sigma(\Cu_q)$ is a quantized analogue of the symplectic Dirac operator (in the special case of a symplectic vector space of dimension $2$) discussed in \cite{DBHS17,DBSS14} and which was studied further in \cite{EM24}. We remark that the definition of $F$ given here depended on the specific choice of comultiplication used for $\Cu_{q^2}$ in Proposition \ref{p:deltaact}. Different choices would lead to essentially the same results.
\end{remark}

\subsection{Fischer Triangle}
Now that we have established a commuting pair $(\sigma_\Delta,\sigma)$ of representations of the pair $(\Cu_{q^2},\Cu_q)$ of quantum groups on the space $\Cp(\bbs)$, we shall describe its joint decomposition. 

Just like in the classical theory, we will focus on the kernel of the operator $F$, our $q$-analogue of the symplectic Dirac operator.
Given a typical monomial $\mathbf{m}=x_1^ax_2^b\otimes y^c \in \Cp(\bbs)^d$, note that
\[
K_\Delta(\mathbf{m}) = (q^2)^{a-b+c+\frac12}\mathbf{m}.
\]
For later use, it will be convenient to know the $K_\Delta$-weight spaces $\Cp(\bbs)^d_\lambda$ for the weights $\lambda=(q^{2})^{1/2-d}$ and $\lambda=(q^{2})^{3/2+d}$. It will also be important to know its intersection with the kernel of $F$ in these spaces.
\begin{proposition}\label{p:MonogWtSpaces}
    We have that $\Cp(\bbs)^d_{(q^2)^{1/2-d}} = \textup{span}\{x_2^d\otimes 1\}$ and
    \[
    \Cp(\bbs)^d_{(q^2)^{3/2+d}} = \textup{span}\{x_1^{d-b}x_2^b\otimes y^{2b+1}\mid 0\leq b\leq d\}.
    \]
\end{proposition}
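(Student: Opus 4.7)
The proof is essentially a direct combinatorial enumeration using the formula for $K_\Delta$ given just before the statement. The plan is as follows.

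First I would fix the monomial basis of $\Cp(\bbs)^d$, namely $\{x_1^a x_2^b\otimes y^c : a+b=d,\ a,b,c\in\bbz_{\geq 0}\}$, and recall that by the preceding computation $K_\Delta(x_1^ax_2^b\otimes y^c) = (q^2)^{a-b+c+1/2}x_1^ax_2^b\otimes y^c$. Since $q$ is not a root of unity (in particular $q^2$ has infinite multiplicative order), distinct values of the exponent $a-b+c+\tfrac{1}{2}$ give linearly independent weights, so the weight spaces are spanned precisely by the monomials whose exponent equals the prescribed value.

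For the first identity, I would set $a-b+c+\tfrac{1}{2} = \tfrac{1}{2}-d$ and use $a+b=d$ to eliminate $a$: substituting $a=d-b$ yields $c = 2(b-d)$. Combined with $0\leq b\leq d$ and $c\geq 0$, this forces $b=d$ (hence $a=0$) and $c=0$, giving the unique basis element $x_2^d\otimes 1$.

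For the second identity, I would set $a-b+c+\tfrac{1}{2} = \tfrac{3}{2}+d$, again substitute $a=d-b$, and obtain $c = 2b+1$. Since $0\leq b\leq d$ automatically gives $c\geq 1\geq 0$, every such pair $(b,c)=(b,2b+1)$ with $b\in\{0,1,\ldots,d\}$ yields a valid monomial $x_1^{d-b}x_2^b\otimes y^{2b+1}$ in the weight space, and these are all of them.

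There is no real obstacle here: the argument is a one-line linear-algebra observation (weight spaces are spanned by the monomials of the right weight, because the monomial basis diagonalises $K_\Delta$) followed by solving a linear Diophantine equation in $a,b,c\geq 0$ with the constraint $a+b=d$. The only point that deserves a brief mention is that linear independence of the monomial weight vectors hinges on $q$ not being a root of unity, which is the standing assumption of Section~\ref{s:qDuality}.
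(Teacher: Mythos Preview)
Your proposal is correct and follows essentially the same approach as the paper: both solve the Diophantine equation $a-b+c+\tfrac{1}{2}=\lambda$ under the constraints $a+b=d$ and $a,b,c\in\bbz_{\geq 0}$ for the two relevant values of $\lambda$. Your explicit remark that distinct exponents yield distinct $K_\Delta$-eigenvalues because $q$ is not a root of unity is a welcome clarification that the paper leaves implicit.
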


\begin{proof}
    For the weight $\lambda=(q^{2})^{1/2-d}$, the equation $a-b+c+\tfrac{1}{2} = \tfrac{1}{2} - d$, subject to the constraints $a+b=d$ and $a,b,c,d\in \bbz_{\geq 0}$, has a unique solution $(a,b,c)=(0,d,0)$. On the other hand, the equation $a-b+c+\tfrac{1}{2} = \tfrac{3}{2} + d$ with same constraints $a+b=d$ and $a,b,c,d\in \bbz_{\geq 0}$ has solutions $\{(a,b,c) = (d-b,b,2b+1)\mid 0\leq b \leq d\}$. This finishes the proof.
\end{proof}

\begin{proposition}\label{p:SpMonog}
Let $\mathbf{p}_+ = x_2^d\otimes 1$ and
\[
\mathbf{p}_- = \sum_{b=0}^d\sqbinom{d}{b}_{q^2}\frac{q^{2b(d-b-1)}}{[2b+1]_q!!}x_1^{d-b}x_2^b\otimes y^{2b+1}.
\]
Then, $\ker(F) \cap \Cp(\bbs)^d_{(q^2)^{1/2-d}} = \bbc \mathbf{p}_+$ and $\ker(F) \cap \Cp(\bbs)^d_{(q^2)^{3/2+d}} = \bbc \mathbf{p}_-$. Furthermore, we have $F_\Delta(\mathbf{p}_\pm)=0$.
\end{proposition}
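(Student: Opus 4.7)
The plan is to proceed by direct computation, reducing both statements to a recursion on the coefficients of a generic element of the relevant weight space and exploiting that this recursion turns out to be the same for $F$ and $F_\Delta$.

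For $\mathbf{p}_+$, Proposition \ref{p:MonogWtSpaces} says $\Cp(\bbs)^d_{(q^2)^{1/2-d}}$ is one-dimensional, so it suffices to check $F(x_2^d\otimes 1)=0$ and $F_\Delta(x_2^d\otimes 1)=0$. This is immediate from the explicit formulas, as every summand of $F$ or $F_\Delta$ contains either $\partial_{1,q^2}$, which annihilates $x_2^d$, or a power of $\nabla$, which annihilates $1$.

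For $\mathbf{p}_-$ I would use the basis of Proposition \ref{p:MonogWtSpaces} to write a generic element of the $(d+1)$-dimensional weight space as $v=\sum_{b=0}^d c_b\, e_b$, where $e_b = x_1^{d-b}x_2^b\otimes y^{2b+1}$. A direct application of the generator relations for $\Cw_q(2)\otimes\Cc_{s,q}$ gives
\[
F(e_b) = [d-b]_{q^2}\,x_1^{d-b-1}x_2^b\otimes y^{2b+2} - q^{-2d+4b}[b]_{q^2}[2b+1]_q\,x_1^{d-b}x_2^{b-1}\otimes y^{2b}.
\]
Collecting the coefficient of the monomial $x_1^{d-\beta-1}x_2^\beta\otimes y^{2\beta+2}$ in $F(v)$ for each $\beta=0,\dots,d-1$ produces the two-term recursion
\[
c_\beta\,[d-\beta]_{q^2} \;=\; c_{\beta+1}\,q^{-2d+4\beta+4}\,[\beta+1]_{q^2}\,[2\beta+3]_q.
\]
Since $q$ is not a root of unity, every $q$-integer appearing above is nonzero, so this recursion determines $c_{\beta+1}$ from $c_\beta$ uniquely, which shows that $\ker(F)$ meets the weight space in a line. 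Plugging in $c_b = \sqbinom{d}{b}_{q^2}q^{2b(d-b-1)}/[2b+1]_q!!$ and simplifying the ratio $c_{\beta+1}/c_\beta$ (which amounts to the elementary identity $(\beta+1)(d-\beta-2)-\beta(d-\beta-1)=d-2\beta-2$ in the exponent) verifies that the proposed formula satisfies the recursion; hence $\ker(F)\cap\Cp(\bbs)^d_{(q^2)^{3/2+d}}=\bbc\mathbf{p}_-$.

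For $F_\Delta(\mathbf{p}_-)=0$, the analogous computation yields
\[
F_\Delta(e_b) = [d-b]_{q^2}\,x_1^{d-b-1}x_2^{b+1}\otimes y^{2b+1} - q^{-2d+4b}[2b+1]_q[b]_{q^2}\,x_1^{d-b}x_2^b\otimes y^{2b-1},
\]
using $[2b]_q = [2]_q[b]_{q^2}$ to simplify the second term. Collecting coefficients of $x_1^{d-\beta}x_2^\beta\otimes y^{2\beta-1}$ for $\beta=1,\dots,d$ and reindexing $\beta' = \beta-1$ produces exactly the recursion above, so the same $c_b$'s solve it and $F_\Delta(\mathbf{p}_-)=0$ follows at no extra cost. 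The only non-routine step is the algebraic manipulation verifying the recursion from the closed form of $c_b$; everything else is bookkeeping of exponents, and the key structural point is the coincidence of the two recursions, which ultimately reflects the commutativity of $F$ and $F_\Delta$ established in Theorem \ref{thm:spinact}.
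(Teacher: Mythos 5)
Your proposal is correct and follows essentially the same route as the paper: write a generic element of the weight space, compute $F$ term by term, obtain the two-term recursion on the $c_b$'s (which agrees with the paper's \eqref{eq:recursion}), and verify the closed form by induction. The one small added value in your write-up is the observation that the $F_\Delta$ computation collapses to the \emph{same} recursion after reindexing, whereas the paper checks $F_\Delta(\mathbf{p}_-)=0$ by a direct telescoping substitution; both are straightforward, but your unification is a tidy way to see that no new work is needed for the last assertion.
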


\begin{proof}
    It is straightforward to check that $F(\mathbf{p}_+)=0$. From Proposition \ref{p:MonogWtSpaces}, let
    \[
    \mathbf{p} = \sum_{b=0}^d c_b x_1^{d-b}x_2^b\otimes y^{2b+1}
    \]
    be a general element in $\Cp(\bbs)^d_{(q^2)^{3/2+d}}$. 
    Solving the equation $F(\mathbf{p})=0$ leads to the recurrence relation
    \begin{equation}\label{eq:recursion}
       c_{b} = \frac{[d-b+1]_{q^2}}{[b]_{q^2}[2b+1]_q}q^{2d-4b}c_{b-1}, 
    \end{equation}
    for $1\leq b\leq d$. Proceeding by induction, it follows that $\ker(F) \cap \Cp(\bbs)^d_{(q^2)^{3/2+d}} = \bbc \mathbf{p}_-$. The last statement is straightforward computation using the definition of $F_\Delta$ in Proposition \ref{p:deltaact} and the expressions for $\mathbf{p}_\pm$.
    \details{
    Note that $c_1 = c_0q^{2(d-2)}[d]_{q^{2}}/[3]_q$. Supposing, by induction, that 
    \[
    c_{b-1}=\sqbinom{d}{b-1}_{q^2}\frac{q^{2(b-1)(d-b)}}{[2b-1]_q!!}c_0,
    \]
    using the recursion \eqref{eq:recursion} we obtain
    \[
        c_b = \frac{[d-b+1]_{q^2}q^{2d-4b}}{[b]_{q^2}[2b+1]_q}\sqbinom{d}{b-1}_{q^2}\frac{q^{2(b-1)(d-b)}}{[2b-1]_q!!}c_0
        = \sqbinom{d}{b}_{q^2}\frac{q^{2b(d-b-1)}}{[2b+1]_q!!}c_0.
    \]
    Finally, as $F_{\Delta} = \mu_2\partial_{1,q^2}-\frac{\gamma_1^{-2}\gamma_2^{2}}{[2]_q}\nabla^2$, clearly $F_\Delta(\mathbf{p}_+)=0$ and
    \begin{align*}
        F_\Delta(\mathbf{p}_-)&=
        \sum_{b=0}^{d}\sqbinom{d}{b}_{q^2}\frac{q^{2b(d-b-1)}}{[2b+1]_q!!}\left([d-b]_{q^2}x_1^{d-b-1}x_2^{b+1}\otimes y^{2b+1}- \frac{[2b+1]_q[2b]_q}{[2]_qq^{2d-4b}}x_1^{d-b}x_2^b\otimes y^{2b-1}\right)\\
        &=\sum_{b=0}^{d-1}\frac{[d]_{q^2}!}{[b]_{q^2}![d-b-1]_{q^2}!}\left(q^{2b(d-b-1)}-q^{2(b+1)(d-b)-2d}\right)x_1^{d-b-1}x_2^{b+1}\otimes y^{2b+1}\\
        &=0,
    \end{align*}
    where we used that $\tfrac{[2b]_q}{[2]_q}=[b]_{q^2}$.
    }

    \end{proof}

\begin{definition}\label{d:Monogenics}
 For each integer $d\geq 0$, let $\Cm^d = \ker(F)\cap \Cp(\bbs)^d$. These spaces are called the {\it polynomial symplectic monogenics in degree $d$}. We write $\Cm = \oplus_{d\geq 0} \Cm^d$ and call the space $\Cm$ as the space of all {\it polynomial symplectic monogenics}.   
\end{definition}

\begin{proposition}\label{p:Einject}
    The element $E$ acts injectively on $\mathcal P^d(\bbs)$, for all $d\geq 0$. 
\end{proposition}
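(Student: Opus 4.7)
The plan is to exhibit a $\mathbb Z$-grading on $\mathcal P(\mathbb S)$ with respect to which $E$ splits into a grade-preserving piece and a grade-raising piece, so that injectivity reduces to injectivity of the grade-preserving piece, which will be manifestly injective. Concretely, declare the monomial $x_1^a x_2^b\otimes y^c$ to have grade $j=b-c\in\mathbb Z$, and decompose $E = E_1+E_2$ with
\[
E_2 = [2]_q\,\mu_2\nu, \qquad E_1 = [2]_q\,q^2\gamma_2^2\omega\,\mu_1\nabla.
\]
A direct check using the monomial actions of $\mu_i,\nu,\nabla,\gamma_2,\omega$ shows
\[
E_2(x_1^ax_2^b\otimes y^c) = [2]_q\,x_1^ax_2^{b+1}\otimes y^{c+1}, \qquad E_1(x_1^ax_2^b\otimes y^c) = [2]_q\,q^{2b+c+1}[c]_q\,x_1^{a+1}x_2^b\otimes y^{c-1},
\]
so $E_2$ preserves the grade (since $(b+1)-(c+1)=b-c$) while $E_1$ raises it by one (since $b-(c-1)=(b-c)+1$).

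Now suppose $v\in \mathcal P(\mathbb S)^d\cap\ker E$ is nonzero and expand $v=\sum_j v_j$ into graded components. Since $v$ is a finite linear combination of monomials, there is a well-defined minimum grade $j_0$ with $v_{j_0}\neq 0$. Extracting the grade-$j_0$ component of the identity $Ev=0$ yields
\[
E_2 v_{j_0} + E_1 v_{j_0-1} = 0,
\]
and since $v_{j_0-1}=0$ by minimality we conclude $E_2 v_{j_0} = 0$.

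The final step is to observe that $E_2$ acts on $\mathcal P_2\otimes \mathbb S\cong \mathbb C[x_1,x_2,y]$ as multiplication by the nonzero polynomial $[2]_q\,x_2 y$ (using that $q$ is not a root of unity, so $[2]_q\neq 0$). Since $\mathbb C[x_1,x_2,y]$ is an integral domain, this multiplication is injective, forcing $v_{j_0}=0$, a contradiction. Hence $v=0$, proving the proposition. I don't anticipate any genuine obstacle; the entire argument hinges on spotting the grading $j=b-c$ that isolates the multiplication-by-$x_2y$ summand of $E$ as the leading piece.
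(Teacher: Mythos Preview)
Your proof is correct and follows essentially the same idea as the paper's: both arguments isolate the summand $E_2=[2]_q\,\mu_2\nu$ of $E$ via a leading-term argument and then use that multiplication by $[2]_q\,x_2y$ is injective. The only cosmetic difference is that the paper filters by the $y$-degree $c$ and looks at the \emph{maximal} such degree, whereas you grade by $b-c$ and look at the \emph{minimal} grade; in each case the extremal component is annihilated by $E_1$ and sees only $E_2$.
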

\begin{proof}
    Let $\mathbf{p} = \sum_{j\geq 0} p_j\otimes y^j$ be a generic element in $\mathcal P(\bbs)^d$, where the polynomial coefficients $p_j\in \mathcal P_2^d$ for all $j\geq 0$. Assume that $E(\mathbf{p})=0$ and suppose that there exists $N\in\mathbb Z_{\geq 0}$ maximal such that $p_N\neq 0$. Since
    $$0=E(\mathbf{p})= \sum_{j\geq 0} p_j\left(a_j x_1\otimes y^{j-1} + q^{-j}x_2\otimes y^{j+1}\right), \ \ \ {\rm for \ some} \ \ a_j\in \mathbb C^\times,$$
    we are forced to have $p_N=0$ and hence a contradiction.  
\end{proof}

\begin{theorem}\label{thm:TriangularDecomp}
    For each integer $d\geq 0$, the $\Cu_{q^2}$--module $\mathcal M^d$ decomposes as $\mathcal M^d = \mathcal M^d_+\oplus \mathcal M^d_-$, with 
    $$\mathcal M^d_+ \cong L_{q^2}(\tfrac12 -d), \ \ \ \ \mathcal M^d_-\cong L_{q^2}(\tfrac32+d).$$
    Furthermore, 
    \begin{equation}\label{eq:HomogDecomp}
        \mathcal P(\bbs)^d = \bigoplus_{k=0}^dE^k(\mathcal M^{d-k}).
    \end{equation}
\end{theorem}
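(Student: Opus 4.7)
The plan is a simultaneous induction on $d$ establishing both the Fischer decomposition \eqref{eq:HomogDecomp} and the module-type identification of $\Cm^d$. The base case $d=0$ is immediate: every summand of $F$ contains a $q$-derivative $\partial_{j,q^2}$ in the polynomial variables, so $F$ vanishes on $\Cp(\bbs)^0 = 1 \otimes \bbs$, whence $\Cm^0 = \bbs_+ \oplus \bbs_-$ by Proposition \ref{p:spinrep}, and \eqref{eq:HomogDecomp} is trivial.

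For the inductive step, I would first place $\Cm^d_\pm := \Cu_{q^2}\mathbf{p}_\pm$ inside $\Cm^d$: Theorem \ref{thm:spinact} ensures $\sigma_\Delta$-stability of $\ker F$, and Proposition \ref{p:SpMonog} provides $\mathbf{p}_\pm$ as $\sigma_\Delta$-lowest-weight vectors of weights $(q^2)^{1/2-d}$ and $(q^2)^{3/2+d}$. Because $q$ is not a root of unity, these weights (with odd half-integer exponents) are never of the form $\pm(q^2)^r$ for $r \in \bbz_{\leq 0}$, so Proposition \ref{p:irredclas} identifies the Verma modules they generate as irreducible, giving $\Cm^d_\pm \cong L_{q^2}(\tfrac12-d), L_{q^2}(\tfrac32+d)$. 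Direct inspection of the generators shows $\sigma_\Delta$ preserves the $y$-parity grading, so $\Cm^d_+ \subseteq \Cp(\bbs)^d_+$ and $\Cm^d_- \subseteq \Cp(\bbs)^d_-$ intersect trivially.

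The key computation for the Fischer decomposition is the commutation identity, obtained by transporting Lemma \ref{l:FEcommutations} through $\sigma$ and using that $K$ acts on $\Cp(\bbs)^{d-k}$ as the scalar $q^{2(d-k)+2}$:
\[
FE^k \mathbf{w} = -[k]_q[2d-k+1]_q\, E^{k-1}\mathbf{w}, \qquad \mathbf{w} \in \Cm^{d-k}.
\]
All quantum integers in this formula are nonzero. Directness of $\sum_k E^k \Cm^{d-k}$ follows by iterating $F$ on any hypothetical relation $\sum_k E^k \mathbf{v}_k = 0$ and extracting each $\mathbf{v}_k$ in turn. For surjectivity I would use a correction argument: given $\mathbf{v} \in \Cp(\bbs)^d$, the inductive hypothesis decomposes $F\mathbf{v} = \sum_{k=0}^{d-1} E^k \mathbf{w}_k$ with $\mathbf{w}_k \in \Cm^{d-1-k}$, and the element
\[
\mathbf{v}' = \mathbf{v} + \sum_{k=0}^{d-1} \frac{1}{[k+1]_q[2d-k]_q}\, E^{k+1}\mathbf{w}_k
\]
satisfies $F\mathbf{v}' = 0$ by the identity above, so $\mathbf{v} \in \Cm^d + \bigoplus_{j=1}^d E^j\Cm^{d-j}$, proving \eqref{eq:HomogDecomp} in degree $d$.

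To conclude that $\Cm^d = \Cm^d_+ \oplus \Cm^d_-$, I would match $\Cu_{q^2}$-isotypes: since $E$ is injective (Proposition \ref{p:Einject}) and commutes with $\sigma_\Delta$, each $E^k\Cm^{d-k}$ for $k \geq 1$ contributes the two irreducibles $L_{q^2}(\tfrac12-d+k) \oplus L_{q^2}(\tfrac32+d-k)$ by the inductive hypothesis. Listing these $2d$ summands against the $2(d+1)$ pairwise non-isomorphic irreducibles of $\Cp(\bbs)^d$ provided by Proposition \ref{p:deltaact} leaves precisely $L_{q^2}(\tfrac12-d)$ and $L_{q^2}(\tfrac32+d)$ unaccounted for, and these must therefore exhaust $\Cm^d$. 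The main obstacle is the Fischer surjectivity step, whose explicit correction relies crucially on the non-vanishing of the normalizing quantum integers $[k+1]_q[2d-k]_q$ secured by $q$ not being a root of unity.
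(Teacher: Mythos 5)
Your argument is correct, and its skeleton (induction on $d$, identification of $\Cm^d_\pm$ via Proposition~\ref{p:SpMonog}, and isotype-matching against Proposition~\ref{p:deltaact}) matches the paper's. Where you genuinely diverge is in establishing the Fischer decomposition~\eqref{eq:HomogDecomp}: the paper invokes $E$-injectivity (Proposition~\ref{p:Einject}) to identify $E(\Cp(\bbs)^{d-1})$ inside $\Cp(\bbs)^d$ and then appeals to the general observation that in a completely reducible $\Cu_q$-module built from lowest-weight irreducibles one has $\ker F\cap\im E=0$, while you instead transport Lemma~\ref{l:FEcommutations} through $\sigma$ to obtain the explicit relation $FE^k\mathbf{w}=-[k]_q[2d-k+1]_q\,E^{k-1}\mathbf{w}$ for $\mathbf{w}\in\Cm^{d-k}$, and use it both to extract the summands (directness) and to produce a concrete correction $\mathbf{v}'=\mathbf{v}+\sum_k([k+1]_q[2d-k]_q)^{-1}E^{k+1}\mathbf{w}_k$ (surjectivity). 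This is more laborious but also more self-contained: the paper's observation implicitly assumes a lowest-weight decomposition of $\Cp(\bbs)$ under $\sigma(\Cu_q)$, which is not formally established until Corollary~\ref{cor:FischerTriang}; your commutation identity does the same job unconditionally and also delivers, essentially for free, the coefficients that reappear in the projector of Theorem~\ref{t:Projection}. Two small remarks: when citing Proposition~\ref{p:irredclas} you should read it for $\Cu_{q^2}$ (so the excluded lowest weights are $\pm(q^2)^r$), which you implicitly do, and the $y$-parity argument for disjointness of $\Cm^d_\pm$ is strictly speaking redundant once the isotype count is in place, since the two irreducibles are non-isomorphic.
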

\begin{proof}
We proceed by induction on $d$ which clearly holds for $d=0$, by Proposition \ref{p:deltaact}. Assuming $d>0$, Theorem \ref{thm:spinact} and Proposition \ref{p:Einject} imply that $E$ induces an isomorphism of $\mathcal P(\bbs)^{d-1}$ onto a proper subspace of $\mathcal P(\bbs)^d$. Using Proposition \ref{p:deltaact}, it is straightforward to check that, as $\Cu_{q^2}$--modules, we have
\begin{align*}
    \mathcal P(\bbs)^d &\cong L_{q^2}(\tfrac12 -d)\oplus L_{q^2}(\tfrac32+d)\oplus E(\mathcal P(\bbs)^{d-1}),\\
    &\cong L_{q^2}(\tfrac12 -d)\oplus L_{q^2}(\tfrac32 + d)\oplus \bigoplus_{k=0}^{d-1} E^{k+1}(\mathcal M^{d-1-k}),
\end{align*}
where we have used the induction hypothesis in the last isomorphism. The result now follows from Proposition \ref{p:SpMonog}, together with the observation that in a completely reducible $\Cu_q$-module which is decomposed in terms of lowest weight irreducible modules, the intersection of  $\ker(F)$ with $\im(E)$ is trivial. \end{proof}
\begin{remark}
    Our usage of the subscripts $\Cm_{\pm}^d$ is because when $d=0$ they refer to even ($+$) and odd ($-$) polynomials in $\bbs=\bbc[y]$. It is unfortunate that the sign in the corresponding lowest weights are reversed.
\end{remark}

\noindent Theorem \ref{thm:TriangularDecomp} implies a complete analogue of the classical Fischer triangle \cite[Theorem 5.4]{DBSS14}.

\begin{corollary}\label{cor:FischerTriang}
    The joint action of $(\Cu_{q^2}, \Cu_q)$ decomposes $\mathcal P(\bbs)$ as 
    $$\mathcal P(\bbs)= \bigoplus_{d\geq 0} \left(L_{q^2}(\tfrac{1}{2}-d)\oplus L_{q^2}(\tfrac{3}{2}+d)\right) \otimes L_q(2d+2).$$
     This decomposition is expressed as the triangle
\[
\begin{tikzcd}
\Cp(\bbs)^0\deq     & \Cp(\bbs)^1\deq           & \Cp(\bbs)^2\deq             & \Cp(\bbs)^3\deq             & \cdots\\
\Cm^0 \arrow[r,"E"] & E(\Cm^0)\arrow[r,"E"]\dop & E^2(\Cm^0)\arrow[r,"E"]\dop & E^3(\Cm^0)\arrow[r,"E"]\dop & \cdots\\
                    & \Cm^1\arrow[r,"E"]        & E(\Cm^1)\arrow[r,"E"]\dop   & E^2(\Cm^1)\arrow[r,"E"]\dop & \cdots\\
                    &                           & \Cm^2\arrow[r,"E"]          & E(\Cm^2)\arrow[r,"E"]\dop   & \cdots\\
                    &                           &                             & \Cm^3\arrow[r,"E"]          & \cdots\\
\end{tikzcd}
\]
with $\Cm^d = \Cm^d_+\oplus \Cm^d_-$ as in Theorem \ref{thm:TriangularDecomp}.\hfil\qed
\end{corollary}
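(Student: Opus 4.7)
The plan is to show that the asserted bimodule decomposition follows from Theorem \ref{thm:TriangularDecomp} by identifying the $\Cu_q$-structure on each $E$-orbit $\bigoplus_{k\geq 0}E^k(\Cm^d)$. First, I would compute the $K$-eigenvalue on $\Cp(\bbs)^d$: since $K=q^2\gamma_1^2\gamma_2^2$ does not involve $\omega$, evaluation on a monomial $x_1^{a}x_2^{b}\otimes y^{c}$ with $a+b=d$ gives the scalar $q^{2d+2}$, so the entire $\Cp(\bbs)^d$ lies in the $K$-weight space of weight $q^{2d+2}$. By Definition \ref{d:Monogenics}, every nonzero $v\in\Cm^d$ is annihilated by $F$, hence is a lowest weight vector for $\Cu_q$ of weight $q^{2d+2}$.

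Since $q^{2d+2}\notin\pm q^{\bbz_{\leq 0}}$ for every $d\geq 0$, Proposition \ref{p:irredclas}(i) implies that $M_q(q^{2d+2})$ is already irreducible, so $\Cu_q\cdot v=\Cu_q^+\cdot v\cong L_q(2d+2)$ for each nonzero $v\in\Cm^d$. Consequently $\bigoplus_{k\geq 0}E^k(\Cm^d)$ is a $\Cu_q$-submodule: stability under $E$ is immediate, and stability under $F$ follows from Lemma \ref{l:FEcommutations}, which when applied to $v\in\Cm^d$ (with $Fv=0$ and $Kv=q^{2d+2}v$) yields $FE^k v$ as a scalar multiple of $E^{k-1}v$. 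For the $\Cu_{q^2}$-side, Theorem \ref{thm:spinact} guarantees that $E$ commutes with $\sigma_\Delta$, and Proposition \ref{p:Einject} says $E$ is injective, so each $E^k$ is a $\Cu_{q^2}$-isomorphism $\Cm^d\to E^k(\Cm^d)$. Combined with the identifications $\Cm^d_+\cong L_{q^2}(\tfrac{1}{2}-d)$ and $\Cm^d_-\cong L_{q^2}(\tfrac{3}{2}+d)$ from Theorem \ref{thm:TriangularDecomp}, the $E$-orbit of $\Cm^d$ acquires the joint bimodule structure $(L_{q^2}(\tfrac{1}{2}-d)\oplus L_{q^2}(\tfrac{3}{2}+d))\otimes L_q(2d+2)$.

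Summing over $d\geq 0$ and rearranging via the graded refinement $\Cp(\bbs)^d=\bigoplus_{k=0}^{d} E^k(\Cm^{d-k})$ from Theorem \ref{thm:TriangularDecomp} reassembles $\Cp(\bbs)$ into the claimed direct sum, which is precisely what the triangle diagram displays row-by-row. I do not anticipate a serious obstacle here: the corollary is essentially a reindexing of Theorem \ref{thm:TriangularDecomp} once the $\Cu_q$-structure on each row is pinned down, and the only subtle point is the irreducibility of the $E$-orbit, which is exactly where the hypothesis that $q$ is not a root of unity together with Proposition \ref{p:irredclas}(i) enters.
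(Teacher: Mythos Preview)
Your proposal is correct and follows the same line as the paper, which in fact gives no explicit proof for this corollary (it is marked \qed\ immediately after the statement), treating it as an immediate consequence of Theorem \ref{thm:TriangularDecomp}. You have simply spelled out the details the paper leaves implicit: the $K$-weight computation on $\Cp(\bbs)^d$, the irreducibility of the $\Cu_q$-module generated by each monogenic via Proposition \ref{p:irredclas}(i), and the $\Cu_{q^2}$-equivariance of $E^k$ via Theorem \ref{thm:spinact} and Proposition \ref{p:Einject}.
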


\section{Projection onto the monogenics}\label{s:projection}
For each $d\geq 0$, given the decomposition of $\Cp(\bbs)^d$ of \eqref{eq:HomogDecomp}, let $\Pi_d$ denote the projection $\Cp(\bbs)^d \to \Cm^d$ onto the space of symplectic monogenics in degree $d$. In this section, we will give a precise expression for $\Pi_d$. We will borrow ideas from the theory of extremal projectors (see, e.g., the survey paper \cite{T11} and the references therein), but, instead of describing these projections as an element in some suitable completion of the quantum group, we will proceed in an ad hoc manner and compute said projections by solving some finite system of equations. 

To that end, note first that since $F^{d+1}(\Cp(\bbs)^d) = 0$, we search for operators of the type
\begin{equation}\label{eq:Projectors}
\Pi_d = c_0\textup{Id} + \sum_{j=1}^d c_jE^jF^j
\end{equation}
for certain complex coefficients $\{c_j\mid 0\leq j \leq d\}$. We shall find these coefficients by solving the system of equations
\begin{equation}\label{eq:system}
    \Pi_d(\mathbf{m}_d) = \mathbf{m}_d,\qquad \Pi_d(E^j(\mathbf{m}_{d-j}))=0, \qquad j=1,2,\ldots,d,
\end{equation}
where $\mathbf{m}_{d-j}\in\Cm^{d-j}$ for each $j$. Note also that since $\sigma_\Delta(\Cu_{q^2})$ commutes with $E$ and $F$, the operators $\Pi_d$ will commute with the diagonal action $\sigma_\Delta$. We have the following result.

\begin{theorem}\label{t:Projection}
    For each $d\geq 0$, the $\Cu_{q^2}$-equivariant projection $\Pi_d:\Cp(\bbs)^d \to \Cm^d$ is given by
    \[
        \Pi_d = \sum_{j=0}^d\frac{[2d-j]_q!}{[j]_q![2d]_q!}E^jF^j.
    \]
\end{theorem}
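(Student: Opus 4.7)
The plan is to follow the strategy already outlined in \eqref{eq:Projectors}--\eqref{eq:system}. Starting from the ansatz $\Pi_d = \sum_{j=0}^d c_j E^j F^j$, the heart of the proof is computing $F^j E^k(\mathbf{m}_{d-k})$ for $\mathbf{m}_{d-k} \in \Cm^{d-k}$ and then forcing $\Pi_d$ to pick out $\Cm^d$ from the decomposition of Theorem \ref{thm:TriangularDecomp}. Each monomial $x_1^a x_2^b \otimes y^c$ with $a+b = d-k$ is a $K$-eigenvector of weight $q^{2(d-k)+2}$ directly from $K = q^2\gamma_1^2\gamma_2^2$; hence so is $\mathbf{m}_{d-k}$, and moreover $F(\mathbf{m}_{d-k}) = 0$. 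Applying Lemma \ref{l:FEcommutations} with $r=j,\, d=k$ and using $\{q^{k-\ell}K\}_q(\mathbf{m}_{d-k}) = [2d-k-\ell+2]_q\mathbf{m}_{d-k}$ yields, for $0 \leq j \leq k$,
\[
F^j E^k(\mathbf{m}_{d-k}) \;=\; (-1)^j\,\frac{[k]_q!\,[2d-k+1]_q!}{[k-j]_q!\,[2d-k-j+1]_q!}\,E^{k-j}(\mathbf{m}_{d-k}),
\]
with $F^j E^k(\mathbf{m}_{d-k}) = 0$ whenever $j > k$.

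Since $E^k(\mathbf{m}_{d-k})$ is nonzero by Proposition \ref{p:Einject}, the system \eqref{eq:system} reduces to $c_0 = 1$ together with, for $1 \leq k \leq d$,
\[
\sum_{j=0}^k (-1)^j\,c_j\,\frac{[k]_q!\,[2d-k+1]_q!}{[k-j]_q!\,[2d-k-j+1]_q!} \;=\; 0,
\]
a triangular system with nonzero diagonal that determines the $c_j$ uniquely. Substituting the proposed $c_j = [2d-j]_q!/([j]_q!\,[2d]_q!)$, factoring out $[k]_q!\,[2d-k+1]_q!/[2d]_q!$, and applying the rewrites $[k]_q!/([j]_q!\,[k-j]_q!) = \sqbinom{k}{j}_q$ and $[2d-j]_q!/[2d-k-j+1]_q! = [k-1]_q!\,\sqbinom{2d-j}{k-1}_q$, the claim becomes equivalent to the $q$-binomial identity
\[
\sum_{j=0}^k (-1)^j\,\sqbinom{k}{j}_q\,\sqbinom{2d-j}{k-1}_q \;=\; 0, \qquad 1 \leq k \leq d.
\]

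Finally, the change of summation variable $\ell = k - j$ turns the left-hand side into $(-1)^k\sum_{\ell=0}^k (-1)^\ell \sqbinom{k}{\ell}_q \sqbinom{(2d-k)+\ell}{k-1}_q$, which is precisely the ``in particular'' assertion of Lemma \ref{l:BinomId} with $n = k$ and $N = 2d-k$; the hypothesis $N \geq n$ becomes $k \leq d$, satisfied by assumption. The main technical step is recognizing the final sum as the $j = n-1$ specialization of Lemma \ref{l:BinomId} (for which the twist $q^{\ell(j-(n-1))}$ trivializes) and executing the index change correctly; everything else is a straightforward computation from Lemma \ref{l:FEcommutations} and the $K$-weight of $\mathbf{m}_{d-k}$.
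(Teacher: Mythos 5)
Your proof is correct and follows essentially the same route as the paper: both compute $F^jE^k$ on lowest-weight vectors via Lemma \ref{l:FEcommutations} together with the $K$-eigenvalue $q^{2(d-k)+2}$, and both reduce the resulting relations among the $c_j$ to the alternating $q$-binomial identity of Lemma \ref{l:BinomId} with $n=k$, $N=2d-k$. The only cosmetic difference is that the paper solves the triangular system inductively (deriving $c_{j+1}$ from $c_0,\dots,c_j$) whereas you substitute the closed form directly and verify; there is also a minor bookkeeping slip in the factor you pull out (it should carry a $[k-1]_q!$ rather than $[k]_q!$ for the rewrite into $\sqbinom{k}{j}_q\sqbinom{2d-j}{k-1}_q$ to be clean), but this does not affect the argument.
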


\begin{proof}
We show, by induction, that the coefficients $c_j$ for $j=0,1,\ldots, d$ are given as in the statement. The claim is clearly true for $j=0$. Assuming for $j$, let $\mathbf{m}\in \Cm^{d-(j+1)}$. Then, the equation $\Pi_d(E^{j+1}(\mathbf{m}))=0 $ implies
\[
0 = E^{j+1}(\mathbf{m})+ \sum_{r=1}^j \frac{[2d-r]_q!}{[r]_q![2d]_q!} E^r F^r E^{j+1}(\mathbf{m}) + c_{j+1}E^{j+1}F^{j+1}E^{j+1}(\mathbf{m}).
\]
Using Lemma \ref{l:FEcommutations} and noting that  $\{q^{j+1-n}K\}_q(\textbf{p}) = [2d-j-n+1]_q\textbf{p}$ for any $n\in\bbz$ and $\textbf{p} \in \Cp(\bbs)^{d-j-1}$, we obtain
\[
c_{j+1} = \sum_{r=0}^j(-1)^{r+j}\frac{[2d-r]_q![2d-2j-1]_q!}{[r]_q![2d]_q![j+1-r]_q![2d-j-r]_q!}.
\]
To finish the proof of this proposition, we must show that
\[
\sum_{r=0}^j(-1)^{r+j}\frac{[2d-r]_q![2d-2j-1]_q!}{[r]_q![2d]_q![j+1-r]_q![2d-j-r]_q!} = \frac{[2d-j-1]_q!}{[j+1]_q![2d]_q!}.
\]
Multiplying both sides of the equation by $(-1)^j[2d]_q!$, note that this is equivalent to proving 
    \[
    0=\sum_{r=0}^{j+1}(-1)^r \frac{[2d-r]_q![2d-2j-1]_q!}{[r]_q![2d-r-j]_q![j+1-r]_q!}.
    \]
    Multiplying both sides by the scalar $\tfrac{[j+1]_q!}{[j]_q![2d-2j-1]_q!}$ we are left with proving the identity
    \[
    \sum_{r=0}^{j+1}(-1)^r \sqbinom{j+1}{r}_q\sqbinom{2d-r}{j}_q = 0.
    \]
    Changing variables from $r$ to $(j+1-\ell)$ and using the symmetry of the binomial coefficients in the lower argument, this is equivalent to the equation
    \[
    \sum_{\ell=0}^{j+1}(-1)^\ell \sqbinom{j+1}{\ell}_q\sqbinom{(2d-j-1)+\ell}{j}_q = 0.
    \]
The result now follows from Lemma \ref{l:BinomId} with $N=2d-j-1\geq d\geq j+1$ and $n=j+1$.
\end{proof}

\section{Generalized Symmetries}\label{s:GenSym}
If $D \in \End(V)$ is an operator on a vector space $V$, we say that $T\in\End(V)$ is a generalized symmetry of $D$ if there exists $S\in\End(V)$ such that
\[
DT = SD.
\]
In such a situation, observe that a generalized symmetry necessarily preserves the kernel of $D$. In this section, we will make use of the notation $\Ci_D$ to denote the left ideal generated by the operator $D$. Note that $T$ is a generalized symmetry of $D$ if and only if $DT\equiv 0 \mod \Ci_D$. 

Back to the setting of the metaplectic duality, in this section we shall study some generalized symmetries of $F$, other than those arising from the diagonal action of $\Cu_{q^2}$. Before diving into this task, first note that, similarly to the classical case, the adjoint action of $\Cu_{q^2}$ on $\Cw_q(2)\otimes \Cc_{s,q}$ naturally contains two copies of the standard representation. We summarize this information in the following proposition.

\begin{proposition}\label{p:adstd}
    The adjoint action of $\Cu_{q^2}$ on $\Cw_q(2)\otimes \Cc_{s,q}$ via the diagonal representation $\sigma_\Delta$ satisfy the following table of commutations.
    \begin{center}
    \begin{tabular}{R | C  C  C  C}
         & \mu_1 & \gamma_1^2\mu_2 & \partial_{2,q^2} & \gamma_2^{-2}\partial_{1,q^2}\\[1.5mm]\hline\\[-3mm]
        \ad_{K_{\Delta}}& q^2\mu_1 & q^{-2}(\gamma_1^2\mu_2) & q^2\partial_{2,q^2} & q^{-2}(\gamma_2^{-2}\partial_{1,q^2})\\[1.5mm]
        \ad_{E_{\Delta}}& 0 &\mu_1 & 0 & -q^2\partial_{2,q^2}\\[1.5mm]
        \ad_{F_{\Delta}}&\gamma_1^2\mu_2& 0 & -q^{-2}(\gamma_2^{-2}\partial_{1,q^2}) & 0
    \end{tabular} .
    \end{center}
    In particular, the spaces $\Cx = \textup{span}\{\mu_1,\gamma_1^2\mu_2\}$ and $\Cd = \textup{span}\{\partial_{2,q^2},\gamma_2^{-1}\partial_{1,q^2}\}$ are isomorphic to the standard representation $L_{q^2}(-1)$ of $\Cu_{q^2}$.
\end{proposition}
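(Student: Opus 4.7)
The plan is to verify the twelve table entries by explicit computation in $\Cw_q(2)\otimes \Cc_{s,q}$, and then read off the irreducibility claim from standard lowest-weight theory.

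Because $K_\Delta = q\gamma_1^2\gamma_2^{-2}\omega^2$ is built only from the degree generators, the relations \eqref{eq:qWeylRelGamCom} immediately give the action of $\ad_{K_\Delta}$ on each of the four listed elements, producing the first row of the table. For the remaining rows, each tested element $a$ is a $K_\Delta$-weight vector of some weight $q^\alpha$, so the formulas \eqref{eq:adjointact} collapse to
\[
\ad_{E_\Delta}(a) = [E_\Delta, a]K_\Delta^{-1}, \qquad \ad_{F_\Delta}(a) = F_\Delta a - q^{-\alpha}\, a F_\Delta.
\]
Expanding $E_\Delta = q\omega^2\mu_1\partial_{2,q^2} + [2]_q^{-1}\nu^2$ and $F_\Delta = \mu_2\partial_{1,q^2} - [2]_q^{-1}\gamma_1^{-2}\gamma_2^2\nabla^2$, I would observe that the purely Clifford summands $\nu^2$ and $\nabla^2$ commute with all four tested elements (which sit in the Weyl factor), so only the first summand of $E_\Delta$ and of $F_\Delta$ can contribute. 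Each individual entry then reduces to a single application of \eqref{eq:CommId1}, namely $[\partial_{j,q^2},\mu_j]_{q^{\pm 2}} = \gamma_j^{\mp 2}$, combined with the bookkeeping provided by \eqref{eq:qWeylRelGamCom} needed to move the degree generators $\gamma_j^{\pm 1}$ and $\omega^{\pm 1}$ past the other generators with the correct powers of $q$. Finally, multiplication on the right by $K_\Delta^{-1}$ (in the case of $\ad_{E_\Delta}$) produces the stated right-hand side.

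Once the table is in place, the ``in particular'' claim is immediate. Both $\Cx$ and $\Cd$ are manifestly two-dimensional $\ad$-stable subspaces. Within each, the vector $\gamma_1^2\mu_2$ (respectively $\gamma_2^{-2}\partial_{1,q^2}$) has $\ad_{K_\Delta}$-weight $q^{-2}$ and is annihilated by $\ad_{F_\Delta}$; hence it is a lowest-weight vector of weight $(q^2)^{-1}$, and by Proposition \ref{p:irredclas} the cyclic $\Cu_{q^2}$-submodule it generates is isomorphic to $L_{q^2}(-1)$, which exhausts $\Cx$ (respectively $\Cd$) by dimension.

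The main obstacle is purely bookkeeping: twelve entries must be checked, and at each step one must carefully track the powers of $q$ arising both from commutations between the degree operators and from the twist factor $q^{-\alpha}$ in the formula for $\ad_{F_\Delta}$. Conceptually, however, there is no difficulty once the weight observation above is made and the two-tensor-factor decoupling for the $\nu^2,\nabla^2$ terms has been recorded.
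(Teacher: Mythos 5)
Your approach matches the paper's: verify each table entry by writing the adjoint action as a twisted commutator (using that each tested element is a $K_\Delta$-weight vector), compute directly in $\Cw_q(2)\otimes\Cc_{s,q}$ via \eqref{eq:qWeylRelGamCom}--\eqref{eq:qWeylRelCom2} and \eqref{eq:CommId1}, and then read off the irreducibility of $\Cx$ and $\Cd$ from lowest-weight theory (Proposition~\ref{p:irredclas}), exactly as the paper does. One small wrinkle worth tidying up: the second summand of $F_\Delta$ is $[2]_q^{-1}\gamma_1^{-2}\gamma_2^{2}\nabla^2$, not just $\nabla^2$, so the observation that ``$\nabla^2$ commutes with the tested elements'' does not \emph{by itself} kill its contribution to $\ad_{F_\Delta}(a)=[F_\Delta,a]_{q^{-\alpha}}$. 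What is actually happening is that $\nabla^2$ commutes with $a$ while conjugation by $\gamma_1^{-2}\gamma_2^{2}$ multiplies $a$ by precisely $q^{-\alpha}$ (since $\gamma_1^{-2}\gamma_2^{2}$ is, up to the central $\omega^{\pm 2}$, exactly $K_\Delta^{-1}$), so $[\gamma_1^{-2}\gamma_2^{2}\nabla^2,a]_{q^{-\alpha}}=0$ for any Weyl-factor weight vector $a$. This is a one-line check, but it should be made; once made, the rest of your argument is correct and equivalent to the paper's.
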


\begin{proof}
    The last part is immediate from the table of relations.
    Using \ref{eq:adjointact}, relations \ref{eq:qWeylRelGam}--\ref{eq:qWeylRelCom2} of Hayashi's $q$-Weyl algebra and Proposition \ref{p:qWeylComm} we have $\ad_{K_\Delta}(\mu_1) = q^2\mu_1$, $\ad_{E_\Delta}(\mu_1) = [E_\Delta,\mu_1]K_\Delta^{-1} = 0$ and $\ad_{F_\Delta}(\mu_1) = [F_\Delta,\mu_1]_{q^{-2}} = [\mu_2\partial_{1,q^2},\mu_1]_{q^{-2}} = \gamma_1^2\mu_2$. The other columns are proved in a similar fashion. 
\end{proof}

Aside from generating a copy of the standard representation of $\Cu_{q^2}$ inside $\Cw_{q}(2)\otimes \Cc_{s,q}$ via the adjoint action, the elements $\gamma_2^{-2}\partial_{1,q^2},\partial_{2,q^2}$ also commute with $F$. Hence, we have the following observation, whose proof is immediate.

\begin{proposition}
    The elements $\gamma_2^{-2}\partial_{1,q^2},\partial_{2,q^2}$ are generalized symmetries of  $F$ such that
    \[
    \gamma_2^{-2}\partial_{1,q^2},\partial_{2,q^2}:\Cm^d\to\Cm^{d-1}. 
    \] \hfil\qed
\end{proposition}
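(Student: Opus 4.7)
The plan is to argue that both operators in fact \emph{commute} with $F$, which trivially makes them generalized symmetries (take $S=T$), and then to observe that they strictly decrease the polynomial degree in $\Cp_2$. First I would recall that, by the tensor-product structure, every element of $\Cw_q(2)$ commutes with every element of $\Cc_{s,q}$; in particular the factors $\nu,\omega,\nabla$ appearing in $F=\partial_{1,q^2}\nu-\gamma_1^{-2}\omega\partial_{2,q^2}\nabla$ commute with both $\partial_{2,q^2}$ and $\gamma_2^{-2}\partial_{1,q^2}$. Hence checking $[\cdot,F]=0$ reduces to a computation entirely inside $\Cw_q(2)$.

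For $\partial_{2,q^2}$ the verification is immediate: by \eqref{eq:qWeylRelGam}--\eqref{eq:qWeylRelCom1} it commutes with $\partial_{1,q^2}$, with $\gamma_1^{-2}$ (both being in the index $i=1$, disjoint from $i=2$), and tautologically with itself, so $[\partial_{2,q^2},F]=0$.

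For $T:=\gamma_2^{-2}\partial_{1,q^2}$ the only nontrivial point is to match two twistings. Write
\[
TF=\gamma_2^{-2}\partial_{1,q^2}^{\,2}\nu-\gamma_2^{-2}\partial_{1,q^2}\gamma_1^{-2}\omega\partial_{2,q^2}\nabla,\qquad FT=\gamma_2^{-2}\partial_{1,q^2}^{\,2}\nu-\gamma_1^{-2}\omega\partial_{2,q^2}\gamma_2^{-2}\partial_{1,q^2}\nabla.
\]
The two left-most terms already agree. For the right-most terms, in $TF$ one commutes $\partial_{1,q^2}$ past $\gamma_1^{-2}$, which by \eqref{eq:qWeylRelGamCom} produces a factor $q^{-2}$; in $FT$ one commutes $\partial_{2,q^2}$ past $\gamma_2^{-2}$, which by the same relation also produces $q^{-2}$. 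All remaining rearrangements (moving $\partial_{1,q^2}$ past $\gamma_2^{-2}$ and past $\partial_{2,q^2}$) are trivial commutations across different indices, so both right-most terms equal $q^{-2}\gamma_1^{-2}\gamma_2^{-2}\omega\partial_{1,q^2}\partial_{2,q^2}\nabla$ and $[T,F]=0$.

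Finally, both $\partial_{2,q^2}$ and $T$ act on the monomial basis of $\Cp_2$ as a $q$-derivative times a power of $\gamma_2$ (which is degree-preserving), hence each sends $\Cp(\bbs)^d$ into $\Cp(\bbs)^{d-1}$. Since they commute with $F$ they preserve $\ker F$, so they restrict to maps $\Cm^d\to\Cm^{d-1}$, giving the claim. The only small point requiring care is bookkeeping of the two $q^{-2}$ factors in the second paragraph; everything else is formal.
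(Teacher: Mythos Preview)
Your proof is correct and follows exactly the approach indicated in the paper: the paper states just before this proposition that these elements ``also commute with $F$'' and declares the proof immediate, and you have spelled out precisely that commutation check together with the degree drop. The bookkeeping of the two $q^{-2}$ factors is right, since $\partial_{j,q^2}\gamma_j^{-2}=q^{-2}\gamma_j^{-2}\partial_{j,q^2}$ for $j=1,2$ and $\partial_{1,q^2}$ commutes with $\partial_{2,q^2}$.
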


Next, we focus on operators that satisfy $\Cm^d\to\Cm^{d+1}$, for all $d\geq 0$. Certainly, $\mu_1$ and $\gamma_1^2\mu_2$ send $\Cm^d\to\Cp(\bbs)^{d+1}$, but their image does not land in $\Cm^{d+1}$, in general. We can amend this situation by composing with the projection $\Pi_{d+1}:\Cp(\bbs)^{d+1}\to \Cm^{d+1}$. We now work towards removing the dependence on the polynomial degree $d$ and globally define operators on $\Cp(\bbs)$ that preserve $\Cm$ by sending $\Cm^d\to\Cm^{d+1}$ for all $d\geq 0$. For that, we need a preliminary observation, which is akin to the classical situation (see \cite[Section 3]{DBHS17}). Recall the notation $\Ci_F$ to denote the left ideal generated by $F$ in $\Cw_q(2)\otimes \Cc_{s,q}$.

\begin{proposition}
    Computing modulo $\Ci_F$, the following hold:
    \begin{equation}\label{eq:adF-formulas}
        \begin{aligned}
            F\mu_1 & \equiv \gamma_1^2\nu \mod \Ci_F,\\
            F^2\mu_1 & \equiv q(q-q^{-1})\gamma_1^2\partial_{1,q^2}\nu^2 - \omega^2\partial_{2,q^2}\mod \Ci_F,\\
            F^3\mu_1 & \equiv 0\mod\Ci_F.
        \end{aligned}
    \end{equation}
\end{proposition}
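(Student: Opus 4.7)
The plan is to establish the three equivalences in sequence, each building on the previous. For the first, I would compute $F\mu_1$ directly from $F=\partial_{1,q^2}\nu - \gamma_1^{-2}\omega\partial_{2,q^2}\nabla$. Since $\mu_1$ commutes with all of $\nu,\nabla,\omega,\partial_{2,q^2}$ (they live in different tensor factors or depend only on $x_2$), the only nontrivial moves are the lower-sign version of \eqref{eq:CommId1}, namely $\partial_{1,q^2}\mu_1 = q^{-2}\mu_1\partial_{1,q^2} + \gamma_1^{2}$, together with $\gamma_1^{-2}\mu_1 = q^{-2}\mu_1\gamma_1^{-2}$. Combining these yields the exact identity $F\mu_1 = q^{-2}\mu_1 F + \gamma_1^{2}\nu$, which proves the first formula.

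For the second, multiplying by $F$ on the left gives $F^2\mu_1 = q^{-2}F\mu_1\cdot F + F\gamma_1^{2}\nu \equiv F\gamma_1^{2}\nu \mod \Ci_F$. The key observation going forward is the ``inverse'' identity $\omega\partial_{2,q^2}\nabla = \gamma_1^{2}\partial_{1,q^2}\nu - \gamma_1^{2}F$, read directly from the definition of $F$; it allows the replacement $\omega\partial_{2,q^2}\nabla \leftrightarrow \gamma_1^{2}\partial_{1,q^2}\nu$ modulo $\Ci_F$ whenever this combination stands at the rightmost end of a product. Expanding $F\gamma_1^{2}\nu$ via $\partial_{1,q^2}\gamma_1^{2} = q^2\gamma_1^{2}\partial_{1,q^2}$ and $\nabla\nu = q^{-1}\nu\nabla + \omega$ (a rewriting of \eqref{eq:qWeylRelCom2}), and then applying the inverse identity to the resulting tail $\omega\partial_{2,q^2}\nabla$, the claimed expression emerges after observing that $q^2-1 = q(q-q^{-1})$.

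For the third, I would apply $F$ once more to obtain $F^3\mu_1 \equiv F[q(q-q^{-1})\gamma_1^{2}\partial_{1,q^2}\nu^2 - \omega^2\partial_{2,q^2}] \mod \Ci_F$, using that $\Ci_F$ is a left ideal. For the summand $F\gamma_1^{2}\partial_{1,q^2}\nu^2$, I would use $\nabla\nu^2 = q^{-2}\nu^2\nabla + q^{-1}[2]_q\omega\nu$ (from \eqref{eq:CommId0a}) and once more invoke the inverse identity on the tail $\omega\partial_{2,q^2}\nabla$; the cubic contributions $\partial_{1,q^2}^2\nu^3$ cancel and only a scalar multiple of $\omega^2\partial_{1,q^2}\partial_{2,q^2}\nu$ survives. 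For the summand $F\omega^2\partial_{2,q^2}$, I would derive the companion identity $\omega\partial_{2,q^2}^2\nabla = \gamma_1^{2}\partial_{1,q^2}\partial_{2,q^2}\nu - \gamma_1^{2}\partial_{2,q^2}F$ by acting on $F$ by $\partial_{2,q^2}$ on the left, and apply it after using $\nabla\omega^2 = q^2\omega^2\nabla$ and $\nu\omega^2 = q^{-2}\omega^2\nu$; this contributes another scalar multiple of $\omega^2\partial_{1,q^2}\partial_{2,q^2}\nu$, and a direct check confirms the two scalar contributions cancel exactly.

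The main obstacle is bookkeeping: tracking the $q$-powers correctly as $\gamma_1^{\pm 2}, \omega, \nu, \nabla, \partial_{1,q^2}, \partial_{2,q^2}$ pass one another, and making sure every ``leftover'' $F$ produced along the way actually ends up at the right end of its term so that it genuinely lies in the left ideal $\Ci_F$. Picking the correct sign convention for \eqref{eq:CommId1} is crucial: the upper-sign rewriting would yield $\gamma_1^{-2}\nu$ rather than $\gamma_1^{2}\nu$ at the first step, and the subsequent cancellations would not propagate.
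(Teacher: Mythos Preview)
Your proposal is correct and amounts to the same direct computation as the paper, organized slightly differently. The paper frames everything through the adjoint action: since $\ad_F(X) = FX - (K^{-1}XK)F$, one has $FX \equiv \ad_F(X) \bmod \Ci_F$, and the three formulas follow by iterating $\ad_F$ on $\mu_1$ and computing the resulting twisted commutators. Your route---multiplying by $F$ and then invoking the ``inverse identity'' $\omega\partial_{2,q^2}\nabla = \gamma_1^{2}\partial_{1,q^2}\nu - \gamma_1^{2}F$ (and its $\partial_{2,q^2}$-shifted companion) to push each leftover $F$ to the right---reproduces exactly the same intermediate expressions. The adjoint-action framing has the minor advantage that the reduction modulo $\Ci_F$ is automatic at every step, so the bookkeeping worry you raise at the end (checking that each stray $F$ really lands on the right) is absorbed structurally; conversely, your inverse-identity substitution is a pleasant shortcut that avoids expanding both summands of $F$ separately at each stage.
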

\begin{proof}
    Since $\ad_F(X) = FX - (K^{-1}XK)F$, we have $FX \equiv \ad_F(X)\mod\Ci_F$, for any $X$ in $\Cw_q(2)\otimes \Cc_{s,q}$. The result follows by straightforwardly computing $\ad_{F^n}(\mu_1)$ for $n = 1,2,3$.
    \details{With more details, we compute
\[
\ad_F(\mu_1) = [F,\mu_1]_{q^{-2}}
=[\partial_{1,q^2}\nu,\mu_1]_{q^{-2}}
+[\gamma_1^{-2}\omega\partial_{2,q^2}\nabla,\mu_1]_{q^{-2}} = \gamma_1^2\nu.
\]
Furthermore, $\ad_{F^2}(\mu_1) = \ad_F(\ad_F(\mu_1))$, and hence we compute
\begin{align*}
    \ad_F(\gamma_1^2\nu) &= [\partial_{1,q^2}\nu,\gamma_1^2\nu]-[\gamma_1^{-2}\omega\partial_{2,q^2}\nabla,\gamma_1^2\nu]\\
    &=(q^2-1)\gamma_1^2\partial_{1,q^2}\nu^2 - \omega\partial_{2,q^2}[\nabla,\nu]_{q^{-1}}\\
    &=(q^2-1)\gamma_1^2\partial_{1,q^2}\nu^2 - \omega^2\partial_{2,q^2}.
\end{align*}
Finally, we compute $\ad_{F^3}(\mu_1) = \ad_F(\ad_F(\gamma_1^2\nu)) = A_1 - A_2$, with
\begin{align*}
    A_1 &= (q^2-1)\ad_F(\gamma_1^2\partial_{1,q^2}\nu^2)\\
    &=(q^2-1)\left([\partial_{1,q^2}\nu,\gamma_1^2\partial_{1,q^2}\nu^2]_{q^2}-[\gamma_1^{-2}\omega\partial_{2,q^2}\nabla,\gamma_1^2\partial_{1,q^2}\nu^2]_{q^2}\right)\\
    &=(q^2-1)(-\omega\partial_{1,q^2}\partial_{2,q^2}[\nabla,\nu^2]_{q^{-2}})\\
    &=-(q^{2}-q^{-2})\omega^2\partial_{1,q^2}\partial_{2,q^2}\nu
\end{align*}
(we used \eqref{eq:CommId0a} in the last line) and
\begin{align*}
    A_2 &= \ad_F(\omega^2\partial_{2,q^2})\\
    &=[\partial_{1,q^2}\nu,\omega^2\partial_{2,q^2}]_{q^2}-[\gamma_1^{-2}\omega\partial_{2,q^2}\nabla,\omega^2\partial_{2,q^2}]_{q^2}\\
    &=-(q^{2}-q^{-2})\omega^2\partial_{1,q^2}\partial_{2,q^2}\nu.
\end{align*}
This finishes the proof.}
\end{proof}
The conclusion thus far is that, for any $\mathbf{m}\in\Cm^{d}$ we can write
\[
\Pi_{d+1}(\mu_1(\mathbf{m}))
    = \mu_1(\mathbf{m}) + \frac{\left(E\ad_F(\mu_1)\right)(\mathbf{m})}{[2d+2]_q}+ \frac{\left(E^2\ad_{F^2}(\mu_1)\right)(\mathbf{m})}{[2]_q[2d+2]_q[2d+1]_q}.
\]
Note, however, that $\{q^nK\}_q(\mathbf{m}) = [2d+2 + n]_q \mathbf{m}$, for any $n\in \bbz$. Hence, working in some completion of $\Cw_q(2)\otimes \Cc_{s,q}$ where we allow divisions (on the right side) by Laurent polynomials in $K,K^{-1}$, it follows that we can write
\[
\Pi_{d+1}\circ\mu_1(\mathbf{m})
    = \left(\mu_1 + \frac{E\ad_F(\mu_1)}{\{K\}_q}+ \frac{E^2\ad_{F^2}(\mu_1)}{[2]_q\{K\}_q\{q^{-1}K\}_q}\right)(\mathbf{m}).
\]
Clearing out the denominator so to obtain an element in $\Cw_q(2)\otimes \Cc_{s,q}$, we define
\begin{equation}\label{eq:Z1}
    Z_1 := [2]_q\mu_1\{K\}_q\{q^{-1}K\}_q + [2]_q E\ad_F(\mu_1)\{q^{-1}K\}_q + E^2\ad_{F^2}(\mu_1).
\end{equation}
Now, as $[F_\Delta,X]=0$ for any $X\in\sigma(\Cu_q)$, it follows that $\ad_{F_\Delta}\ad_X = \ad_X\ad_{F_\Delta}$. Further, recall from Proposition \ref{p:adstd} that $\ad_{F_\Delta}(\mu_1) = \gamma_1^2\mu_2$ and that $\ad_{F^3}(\gamma_1^2\mu_2)=\ad_{F_\Delta}(\ad_{F^3}(\mu_1))=0$. So, let us also consider
\begin{equation}\label{eq:Z2}
    Z_2 := \ad_{F_\Delta}(Z_1) = [2]_q(\gamma_1^2\mu_2)\{K\}_q\{q^{-1}K\}_q + [2]_q E\ad_F(\gamma_1^2\mu_2)\{q^{-1}K\}_q + E^2\ad_{F^2}(\gamma_1^2\mu_2).
\end{equation}

\begin{theorem}
    The elements $Z_1,Z_2$ of \eqref{eq:Z1}, \eqref{eq:Z2} are generalized symmetries of $F$ such that 
    \[
    Z_1,Z_2:\Cm^d\to\Cm^{d+1}
    \] 
    for all $d\geq 0$. Furthermore, via $\ad^{\sigma_\Delta}$, their linear span $\Cz = \textup{span}\{Z_1,Z_2\}$ is isomorphic to the standard representation $L_{q^2}(-1)$ of $\Cu_{q^2}$.
\end{theorem}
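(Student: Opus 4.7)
The plan is to establish four things: first, that $Z_1$ acts on each $\Cm^d$ as a nonzero scalar multiple of $\Pi_{d+1}\circ\mu_1$ and therefore sends $\Cm^d$ into $\Cm^{d+1}$; second, that $Z_2=\ad_{F_\Delta}(Z_1)$ does the same because $\sigma_\Delta(\Cu_{q^2})$ preserves the monogenics; third, that the inclusion $Z_i(\Cm)\subseteq\Cm=\ker F$ forces $FZ_i\in\Ci_F$; and fourth, that the $\ad^{\sigma_\Delta}$-action on $\Cz$ has the weight structure of the standard representation. For the first step I would use that $K=q^2\gamma_1^2\gamma_2^2$ acts on $\Cp(\bbs)^d$ as the scalar $q^{2d+2}$, so $\{K\}_q$ and $\{q^{-1}K\}_q$ act as $[2d+2]_q$ and $[2d+1]_q$, and that for $\mathbf{m}\in\Cm^d\subseteq\ker F$ the congruence $F^j\mu_1\equiv\ad_F^j(\mu_1)\bmod\Ci_F$, proved inductively from $Fa=\ad_F(a)+K^{-1}aK\cdot F$ together with the $\ad_F$-stability of $\Ci_F$, combined with $\ad_F^j(\mu_1)=0$ for $j\geq 3$ from \eqref{eq:adF-formulas}, forces $F^j\mu_1(\mathbf{m})=\ad_F^j(\mu_1)(\mathbf{m})$, which vanishes for $j\geq 3$. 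Plugging these evaluations into the closed form of $\Pi_{d+1}$ from Theorem \ref{t:Projection} and clearing denominators produces exactly $Z_1(\mathbf{m}) = [2]_q[2d+2]_q[2d+1]_q\,\Pi_{d+1}(\mu_1(\mathbf{m}))\in\Cm^{d+1}$.

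Since $F_\Delta$ and $K_\Delta^{\pm 1}$ commute with $F$ and preserve each $\Cp(\bbs)^d$, they preserve every $\Cm^d$; expanding $Z_2 = F_\Delta Z_1 - K_\Delta^{-1} Z_1 K_\Delta F_\Delta$ therefore shows $Z_2(\Cm^d)\subseteq\Cm^{d+1}$. Hence $Z_i(\Cm)\subseteq\Cm=\ker F$, so $FZ_i$ vanishes on $\ker F$. The generalized symmetry property then follows from the direct sum decomposition $\Cp(\bbs) = \Cm\oplus E(\Cp(\bbs))$ of Corollary \ref{cor:FischerTriang} together with the injectivity of $F$ on $E(\Cp(\bbs))$, which I would verify by combining Proposition \ref{p:Einject} with the commutation $FE^k(\mathbf m) = -[k]_q[2d-k+1]_q E^{k-1}(\mathbf m)$ for $\mathbf m\in \Cm^{d-k}$ derived from Lemma \ref{l:FEcommutations}. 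Under this decomposition any operator vanishing on $\ker F$ has the form $SF$, so $FZ_i\in\Ci_F$.

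For the identification $\Cz\cong L_{q^2}(-1)$, I would exploit the fact that the right-multiplication factors in $Z_1$ other than $\mu_1$, namely $E, \{K\}_q,\{q^{-1}K\}_q\in\sigma(\Cu_q)$, are killed by $\ad_{E_\Delta}$ and $\ad_{F_\Delta}$ and fixed by $\ad_{K_\Delta^{\pm 1}}$ (because $\sigma_\Delta(\Cu_{q^2})$ and $\sigma(\Cu_q)$ commute), and that $\ad_{X_\Delta}\ad_F=\ad_F\ad_{X_\Delta}$ for $X\in\{E,F,K\}$. Applying the twisted Leibniz rule for the adjoint action via the coproducts $\Delta(e)=e\otimes k+1\otimes e$ and $\Delta(f)=f\otimes 1+k^{-1}\otimes f$ then reduces $\ad_{X_\Delta}(Z_1)$ to a summand-by-summand replacement of $\mu_1$ by $\ad_{X_\Delta}(\mu_1)$, and Proposition \ref{p:adstd} yields $\ad_{E_\Delta}(Z_1)=0$, $\ad_{K_\Delta}(Z_1)=q^2 Z_1$, and $\ad_{F_\Delta}(Z_1)=Z_2$ by the very definition \eqref{eq:Z2}. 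The analogous computation for $Z_2$, built from $\gamma_1^2\mu_2$ and using $\ad_{F_\Delta}(\gamma_1^2\mu_2)=0$, gives $\ad_{F_\Delta}(Z_2)=0$ and $\ad_{K_\Delta}(Z_2)=q^{-2}Z_2$. Thus $\Cz$ is a $2$-dimensional weight module with lowest-weight vector $Z_2$ of weight $q^{-2}$, identifying it with $L_{q^2}(-1)$. The main obstacle is the bookkeeping of the twisted Leibniz and conjugation rules, but it simplifies drastically precisely because every factor multiplying $\mu_1$ (resp.\ $\gamma_1^2\mu_2$) in $Z_1$ (resp.\ $Z_2$) lies in the centralizer $\sigma(\Cu_q)$ of $\sigma_\Delta(\Cu_{q^2})$, reducing the whole calculation to Proposition \ref{p:adstd}.
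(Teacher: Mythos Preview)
Your proof is correct. The parts establishing $Z_1,Z_2:\Cm^d\to\Cm^{d+1}$ and $\Cz\cong L_{q^2}(-1)$ are essentially the same as the paper's (which dispatches both as ``by construction'' and a one-line reference to Proposition~\ref{p:adstd}), only spelled out in more detail.

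The genuine difference is in how the generalized symmetry property is obtained. The paper proves it \emph{algebraically}: it computes $\ad_F(Z_1)$ term by term inside $\Cw_q(2)\otimes\Cc_{s,q}$ via the twisted derivation rule, and exhibits an explicit cancellation showing $\ad_F(Z_1)\in\Ci_F$ (the left ideal in the $q$-Weyl algebra). From $FZ_1=(K^{-1}Z_1K)F+\ad_F(Z_1)$ this gives a concrete $S\in\Cw_q(2)\otimes\Cc_{s,q}$ with $FZ_1=SF$. Your argument is \emph{representation-theoretic}: having shown $Z_i(\ker F)\subseteq\ker F$, you factor $FZ_i$ through $F$ using the Fischer decomposition and the injectivity of $F$ on $E(\Cp(\bbs))$. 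This yields $S\in\End(\Cp(\bbs))$ only, which is exactly what the definition of generalized symmetry requires, so nothing is lost for the theorem as stated. (In fact you could shorten your step 3 further: once $FZ_i$ vanishes on $\ker F$ it automatically factors as $SF$ by elementary linear algebra, without invoking the Fischer triangle or injectivity on $E(\Cp(\bbs))$.) The trade-off is that the paper's computation is self-contained and gives the stronger conclusion that $S$ lives in the $q$-differential operator algebra, while your route is cleaner and reuses structural results already in hand.
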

\begin{proof}
    By construction, we have $Z_1,Z_2:\Cm^d\to \Cm^{d+1}$ and, using Proposition \ref{p:adstd} and the fact that $\sigma_\Delta$ commutes with $\sigma$, we have $\Cz\cong_{\Cu_{q^2}}L_{q^2}(-1)$. It remains to prove that $Z_1,Z_2$ are generalized symmetries of $F$. To that end, we will show that, for $j\in\{1,2\}$, we have $\ad_F(Z_j) \equiv 0 \mod \Ci_F$, since this implies that $FZ_j = (K^{-1}Z_jK)F + \ad_F(Z_j)\equiv 0 \mod\Ci_F$. Using that $\sigma_\Delta$ and $\sigma$ commute, observe that 
    \[
    \ad_F(Z_2) = \ad_F\ad_{F_\Delta}(Z_1) = \ad_{F_\Delta}\ad_F(Z_1) = F_\Delta\ad_F(Z_1) - K_\Delta^{-1}\ad_F(Z_1)K_\Delta F_\Delta. 
    \]
    So it suffices to show that $\ad_F(Z_1)\equiv 0\mod \Ci_F$. But, in order to do so, we compute directly
    \begin{align*}
        \ad_F(E) &= -\{K\}_q + (1-q^{-2})EF,\\
        \ad_F(\{q^nK\}_q) &\equiv 0\mod\Ci_F,&(\textup{for all } n\in\bbz)\\
    \ad_F(E^2) &\equiv -[2]_qE\{qK\}_q \mod \Ci_F,\\
        \ad_F(E)\ad_F(\mu_1) &\equiv -\{K\}_q\ad_F(\mu_1) + (1-q^{-2})E\ad_{F^2}(\mu_1)\mod \Ci_F,
    \end{align*}
    so that using the twisted derivation property $\ad_F(XY) = \ad_F(X)Y + (K^{-1}XK)\ad_F(Y)$ for all $X,Y\in\Cw_q(2)\otimes\Cc_{s,q},$ we get
    \begin{align}\label{eq:gSymEq1}
        \ad_F(\mu_1\{K\}_q\{q^{-1}K\}_q)  &\equiv (\ad_F\mu_1)\{K\}_q\{q^{-1}K\}_q \mod\Ci_F\\\label{eq:gSymEq2}
        \ad_F(E\ad_F(\mu_1)\{q^{-1}K\}_q) &\equiv \left(-\ad_F(\mu_1)\{K\}_q + E\ad_{F^2}(\mu_1)\right)\{q^{-1}K\}_q \mod\Ci_F\\\nonumber
        \ad_F(E^2\ad_{F^2}(\mu_1)) &\equiv -[2]_qE\{qK\}_q\ad_{F^2}(\mu_1) \mod \Ci_F\\\label{eq:gSymEq3}
        &= -[2]_qE\ad_{F^2}(\mu_1)\{q^{-1}K\}_q,
    \end{align}
    where we used in the second line that $K\ad_{F}(\mu_1) = \ad_{F}(\mu_1)K$  and in the last line that $K\ad_{F^2}(\mu_1) = q^{-2}\ad_{F^2}(\mu_1)K$ (see the formula for $\ad_{F^n}(\mu_1)$ in \eqref{eq:adF-formulas}). Using \eqref{eq:gSymEq1}--\eqref{eq:gSymEq3}, it follows immediately that $\ad_F(Z_1)\equiv 0$ modulo $\Ci_F$, and we are done.
\end{proof}

\section*{Acknowledgements}
We would like to thank H. De Bie for indicating to us the reference \cite{DBHS17} and for encouraging us to work on this topic.
MB and MDM were partially supported by a CNPq grant 405793/2023-5 and MDM was supported by the special research fund (BOF) from Ghent University [BOF20/PDO/058].
Substantial parts of this paper were developed while MB visited Ghent University and when MDM visited the Federal University of Paran\'a. They would like to kindly thank all the staff in both institutions for the support leading to excellent working conditions. They also acknowledge the generous support of a CWO grant from Ghent University to foster international collaborations.

\end{document}